\newtheorem*{theorem*}{Theorem}
\newtheorem{lemma}{Lemma}[section]
\newtheorem{theorem}[lemma]{Theorem}
\newtheorem{proposition}[lemma]{Proposition}
\newtheorem{cor}[lemma]{Corollary}
\newtheorem{claim*}{Claim}
\newtheorem{thm}[lemma]{Theorem}
\newtheorem{defn}[lemma]{Definition}
\theoremstyle{definition}
\newtheorem{remark}[lemma]{Remark}
\newcommand{\PP}{{\mathbb P}}
\newcommand{\F}{{\mathbb F}}
\newcommand{\Z}{{\mathbb Z}}
\newcommand{\curprime}{\mbox{'}}
\DeclareMathOperator{\Aut}{Aut}
\newcommand{\prob}{\mathrm{Prob}}
\numberwithin{equation}{section}
\numberwithin{table}{section}
\title{Statistics for biquadratic covers of the projective line over finite fields.}
\author{Elisa Lorenzo}
\address{Universiteit Leiden\\ 
Mathematisch Instituut\\
Niels Bohrweg 1\\
2333 CA Leiden (The Netherlands)}
\email{e.lorenzo.garcia@math.leidenuniv.nl}
\author{Giulio Meleleo}
\address{Universit\`{a} degli Studi ``Roma Tre''\\
                                Dipartimento di Matematica e Fisica\\
                                Largo San Leonardo Murialdo 1\\
                                00146 Roma (Italy)}
\email{meleleo@mat.uniroma3.it}
\author{Piermarco Milione\\ with an appendix by Alina Bucur}
\address{Universitat de Barcelona\\
                                Departament d'\`{A}lgebra i Geometria\\
                                Gran Via de les Corts Catalanes 585\\
                                08005 Barcelona (Spain)}
\email{pmilione@ub.edu}
\address{University of California, San Diego\\ 
Department of Mathematics\\
9500 Gilman Drive \#0112\\
CA 92093 La Jolla (USA)}
\email{alina@math.ucsd.edu}
\thanks{The second author was partially supported by the ``National Group for Algebraic and Geometric Structures, and their Applications'' (GNSAGA-INDAM). The third author was partially supported by the Spanish Council under project MTM2012-33830 and by an ADR grant from the Universitat de Barcelona}
\let\@wraptoccontribs\wraptoccontribs
\subjclass[2010]{11G20,14H05,11M50}
\keywords{Function fields, biquadratic curves, biquadratic covers, number of points over finite fields, arithmetic statistics}
\begin{document}


\begin{abstract}
We study the distribution of the traces of the Frobenius endomorphism of genus $g$ curves which are quartic non-cyclic covers of $\mathbb{P}_{\mathbb{F}_q}^{1}$, as the curve varies in an irreducible component of the moduli space. We show that for $q$ fixed, the limiting distribution of the trace of Frobenius equals the sum of $q + 1$ independent random discrete variables. We also show that when both $g$ and $q$ go to infinity, the normalized trace has a standard complex Gaussian distribution. Finally, we extend these computations to the general case of arbitrary covers of $\mathbb{P}_{\mathbb{F}_q}^{1}$ with Galois group isomorphic to $r$ copies of $\mathbb{Z}/2\mathbb{Z}$. For $r=1$, we recover the already known hyperelliptic case.
\end{abstract}

\maketitle

\section{Introduction}
One of the most influent result in class field theory is Chebotarev's density theorem. As it is well known, this result is a deep generalization of the Theorem of Dirichlet about equidistribution of rational primes in arithmetic progression and gives a complete understanding of the distribution of primes in a fixed Galois number field extension with respect to their splitting behavior (for an interesting discussion of the theorem and its original proof see \cite{LS96}). 
In the function field case, the parallel statement is carried over by the Sato-Tate conjecture for curves, which studies the distribution of the Frobenius endomorphism of the reduction modulo $p$ of a fixed curve, when the prime $p$ varies. 

In order to complement this research line in other directions, several mathematicians were led to consider the following new general problem:
given a family of curves, of genus $g$ over $\mathbb{F}_{q}$, satisfying certain properties, understand the distribution of the Frobenius endomorphism of the curves of the family. This is sometimes called the \emph{vertical Sato-Tate conjecture}, since the prime $p$ is fixed and the curve varies in the family. We can study the limiting distribution in two different ways, depending on whether we let the genus $g$ or the cardinality $q$ of the field tend to infinity. It is then interesting to compare both limit results. 

When $g$ is fixed and $q$ goes to infinity the problem can be solved thanks to Deligne's equidistribution theorem (cf. \cite{KS99}) while for the complementary case different techniques are applied depending on the particular family considered. The fluctuation in the number of points at the $g$-limit has been studied for different families of curves, such as:
\begin{itemize}
\item 
Hyperelliptic curves , cf. \cite{KR09}, \cite{Betal09},
\item
Cyclic trigonal curves (i.e. cyclic 3-covers of the projective line), cf. \cite{Betal09}, \cite{Xiong2010},
\item
General trigonal curves, cf. \cite{Wood2012},
\item
$p$-fold cover of the projective line, \cite{Betal2011},
\item
$\ell$-covers of the projective line, cf. \cite{Betal09}, \cite{Betal2015}.
\end{itemize}

In the present paper, we study the distribution of the number of points over $\mathbb{F}_{q}$ for a genus $g$ curve $C$ defined over $\mathbb{F}_{q}$ which is a quartic non-cyclic cover of the projective line $\mathbb{P}^{1}_{\mathbb{F}_{q}}$, at the $q$-limit (for a genus $g$ fixed) and at the $g$-limit (with $q$ fixed). This is the first time that a family of non-cyclic abelian covers is studied. The distribution obtained is different to the product of probabilities for the family of hyperelliptic curves, what at first sight could be guessed. Therefore, the study of this family seems to be the first natural step is order to understand the general abelian case. 

Let $\mathcal{B}_{g}(\mathbb{F}_q)$ be the family of genus $g$ quartic non-cyclic cover of the projective line $\mathbb{P}^{1}_{\mathbb{F}_{q}}$, and consider the following decomposition
$$
\mathcal{B}_{g}(\mathbb{F}_q)=\bigcup_{g_{1}+g_{2}+g_{3}=g}\mathcal{B}_{(g_{1},g_{2},g_{3})}(\mathbb{F}_q)
$$
where $\mathcal{B}_{(g_{1},g_{2},g_{3})}(\mathbb{F}_q)$ denotes the subfamily of curves $C\in\mathcal{B}_{g}(\mathbb{F}_q)$ such that the three hyperelliptic quotients of $C$ have genera $g_{1},g_{2}$ and $g_{3}$.

The main theorem of the paper is the following:

\bigskip\noindent {\bf Theorem \ref{main}}\, {\em
If the three genera $g_{1},g_{2},g_{3}$ go to infinity, then we have that  
$$
\frac{|\{C\in\mathcal{B}_{(g_1,g_2,g_3)}(\mathbb{F}_q):\,\mathrm{Tr}(\mathrm{Frob}_C)=-M\}|}{|\mathcal{B}_{(g_1,g_2,g_3)}(\mathbb{F}_q)|}=
\mathrm{Prob}\left(\sum_{j=1}^{q+1}X_{j}=M\right)
$$
where the $X_{j}$ are i.i.d. (identically independently distributed) random variables such that
$$
X_{i}=\left\{\begin{array}{ccc}-1 & \rm{with\,probability}& \frac{3(q+2)}{4(q+3)}\\
\,&\,&\,\\
                                                 1 & \rm{with\,probability}& \frac{6}{4(q+3)}\\
\,&\,&\,\\                                                 
                                              3 & \rm{with\,probability}& \frac{q}{4(q+3)}
\end{array}\right..$$}

\subsection*{Outline}
In Section $2$, we introduce the family of biquadratic curves and we give a parametrization of the family in terms of terns of coprime square-free polynomials. In Section $3$, we compute the monodromy group of the family in the sense of Katz and Sarnak (cf. \cite[Ch. 9]{KS99}) and we obtain the corresponding distribution of the Frobenius traces at the $q$-limit. In Section $4$, previous theorem is proven, and in Section $5$ the moments of the Frobenius traces are computed at the $g$-limit, proving that when both $g$ and $q$ go to infinity the normalized trace has a standard complex Gaussian distribution. In last section, Theorem \ref{main} is generalized for an arbitrary cover of the projective line with Galois group isomorphic to $r$ copies of $\mathbb{Z}/2\mathbb{Z}$. The paper concludes with an Appendix, writen by Alina Bucur, giving the heuristic for the distribution of the number of points for the whole family of $r$-quadratic curves. 

\subsection*{Notations} We now fix some notations and conventions that will be valid in the sequel.
\begin{itemize}
\item
$p\neq 2$ is a prime integer, and $q$ is a positive power of $p$.
\item
$k=\mathbb{F}_{q}(t)$ is the function field of $\mathbb{P}^{1}_{\mathbb{F}_q}$, and $K/k$ is a finite extension.
\item
$(f,g)$ denotes the greatest common divisor of two polynomials $f,g\in\mathbb{F}_q[t]$.
\item
$\text{deg}(f)$ denotes the degree of a polynomial $f$
\item
$|f|:=q^{\text{deg}(f)}$ denotes the norm of a polynomial $f$.
\item
$\tilde{f}$ is the polynomial obtained inverting the order of the coefficients of $f$.
\item
$g(C)$ denotes the geometric genus of the projective curve $C/\mathbb{F}_q$,
\item
and $\text{Frob}_C$ denotes its geometric Frobenius morphism.
\end{itemize}

\subsection*{Acknowledgements} This work was started in March 2014 at the Arizona Winter School ``Arithmetic Statistic'' and the authors would like to thank the organizers for creating such a stimulating working environment. Moreover, the authors are grateful to Alina Bucur and Chantal David for initiating them into this rich and appealing field of research in number theory and for several discussions and helpful comments during the preparation of the paper. Finally, we would like to thank Patrick Meisner for carefully reading a first draft of the paper and for his useful comments.

\section{The family of biquadratic curves}\label{sec:family}

We first define and give the basic properties of the family of biquadratic curves. We determine its genus in terms of the equations defining the curves, and we study the irreducible components of the coarse moduli space of biquadratic curves.
 
Recall that if $K/\mathbb{F}_q(t)$ is a finite Galois extension such that $K\cap \bar{\mathbb{F}}_q=\mathbb{F}_q$, then there exists, up to isomorphism, a unique nonsingular projective curve 
$C$ with function field $\mathbb{F}_q(C)=K$, together with a regular morphism $\varphi: C\rightarrow\mathbb{P}^{1}_{\mathbb{F}_{q}}$ defined over $\mathbb{F}_{q}$ (cf. \cite[I,Th. 6.6, Th.6.9]{HarBook}).

\begin{defn}\label{def_biq_curve}
We call biquadratic curve a smooth projective curve $C$, together with a regular morphism $\varphi: C\rightarrow\mathbb{P}^{1}_{\mathbb{F}_{q}}$ defined over $\mathbb{F}_{q}$, that induces a field extension with Galois group $\mathrm{Gal}(\mathbb{F}_q(C)/\mathbb{F}_q(t))\simeq \mathbb{Z}/2\mathbb{Z}\times\mathbb{Z}/2\mathbb{Z}$. 
\end{defn}

Since $\mathrm{char}(k)\neq 2$, it is clear that every non-cyclic quartic extension of $k$ is of the form $K=k(\sqrt{h_1(t)},\sqrt{h_2(t)})$, for some $h_1(t),h_{2}(t)\in\mathbb{F}_q[t]$ different non-constant polynomials, that we can take to be square-free. Moreover, if the leading coefficient of $h_{i}$ is a square in $\mathbb{F}_{q}$, then we can assume that this is equal to $1$. Therefore, if $C$ is a biquadratic curve, then an affine model of $C$ in $\mathbb{A}^{3}_{\mathbb{F}_{q}}$ is given by  
$$
C:\begin{cases} 
y_{1}^{2}=h_1(t) \\ 
y_{2}^{2}=h_2(t)
\end{cases}.
$$

\begin{remark}\label{coprime}
If $K:=k(\sqrt{h_1(t)},\sqrt{h_2(t)})$ is a biquadratic extension of $k$, then there are exactly 3 different quadratic subextensions of $K$, namely $k(\sqrt{h_{1}}),k(\sqrt{h_{2}})$ and $k(\sqrt{h_{1}h_{2}})$. 

If we write $h_{i}=f_{i}f$ for $i=1,2$, with $f=(h_1,h_2)$, then clearly we have that $(f_{1},f_{2})=(f_1,f)=(f_2,f)=1$ and these three subextensions are $k(\sqrt{ff_{1}}),k(\sqrt{ff_{2}})$ and $k(\sqrt{f_{1}f_{2}})$.

Two such extensions $k(\sqrt{h_1(t)},\sqrt{h_2(t)})$ and $k(\sqrt{h'_1(t)},\sqrt{h'_2(t)})$ define the same biquadratic extension if and only if we have the equality of sets
$$
\{h_1,h_2,\frac{h_1h_2}{(h_1,h_2)^2}\}=\{h'_1,h'_2,\frac{h'_1h'_2}{(h'_1,h'_2)^2}\}.
$$

\end{remark}

\begin{remark}\label{ram_infty}
Recall that if $\pi:C\rightarrow\mathbb{P}^{1}$ is a degree $2$ regular cover, whose affine plane model is $y^{2}=F(t)$, with $F(t)$ a square-free polynomial over $\mathbb{F}_{q}$, then the point at infinity is ramified in the cover $\pi$ if and only if the degree $d$ of $F$ is odd. Indeed, if we take take $u=\frac{1}{t}$, then the function field of $C$ is
$$
k(C)=k(\sqrt{F(t)})=k(\sqrt{F(1/u)})=k(\sqrt{u^{-d}\tilde{F}(u)})
$$
and then it is clear that $t=\infty$ ramifies if and only if the point $u=0$ ramifies, i.e. if and only if $d$ is odd.
\end{remark}

\begin{proposition} \label{genus}
Let $h_1(t),h_{2}(t)\in\mathbb{F}_q[t]$ be different square-free polynomials, and let $C$ be the curve whose function field is $k(C)=k(\sqrt{h_1(t)},\sqrt{h_2(t)})$. For every $i=1,2$, write $h_i=ff_i$, with $f=(h_{1},h_{2})$, and define $h_{3}:=f_{1}f_{2}$.

If we denote by $C_{i}$ the hyperelliptic curve whose affine plane model is given by the equations $y^{2}=h_{i}(t)$, for $i=1,2,3$, then we have the following formula for the genus of $C$:
$$
g(C)=g(C_{1})+g(C_{2})+g(C_{3}).
$$
Moreover, if we denote by $n:=\operatorname{deg}(f)$ and $n_i:=\operatorname{deg}(f_i)$, 
$$
g(C)=g(n_1,n_2,n):=n_1+n_2+n+e_\infty-4,
$$
where $e_{\infty}$ is the ramification index at the point at infinity, that is,
$$
e_{\infty}:=\left\{\begin{array}{ll}2, & \text{if}\;n\equiv n_{1}\equiv n_{2}\equiv 0\,(\mathrm{mod}\,2)\\
1, &otherwise
\end{array} \right..
$$
\end{proposition}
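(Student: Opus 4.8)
The plan is to compute the geometric genus of $C$ by applying the Riemann--Hurwitz formula directly to the degree-$4$ Galois cover $\varphi\colon C\to\mathbb{P}^1_{\mathbb{F}_q}$, whose Galois group is $V=\mathbb{Z}/2\times\mathbb{Z}/2$, and to read off the additive formula $g(C)=g(C_1)+g(C_2)+g(C_3)$ as a by-product. Working over $\overline{\mathbb{F}}_q$, since $\mathbb{P}^1$ has genus $0$ and the cover is tame ($p\neq 2$), Riemann--Hurwitz reads
\[
2g(C)-2=4(2\cdot 0-2)+\sum_{b}\frac{4}{e_b}\,(e_b-1),
\]
where $b$ runs over the branch points of $\varphi$ and $e_b=|I_b|$ is the order of the inertia group. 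So the whole computation reduces to locating the branch points and determining each $e_b$.

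First I would treat the finite branch points. A closed point $t=a$ of $\mathbb{A}^1$ is ramified in $\varphi$ exactly when at least one of the three quadratic subextensions $k(\sqrt{h_1}),k(\sqrt{h_2}),k(\sqrt{h_1h_2})=k(\sqrt{f_1f_2})$ ramifies there, i.e. when $a$ is a root of $ff_1f_2$. The coprimality $(f,f_1)=(f,f_2)=(f_1,f_2)=1$ together with the square-freeness of the $h_i$ then force, at each such $a$, exactly two of the three subextensions to ramify and one to stay unramified: a root of $f$ gives odd valuation in $h_1$ and $h_2$ but not in $f_1f_2$; a root of $f_1$ gives odd valuation in $h_1$ and in $f_1f_2$ but not in $h_2$; and symmetrically for $f_2$. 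Since the unramified subextension is fixed by the inertia group $I_a$, we get $I_a$ of order $2$, hence $e_a=2$, at every finite branch point. As $f,f_1,f_2$ are pairwise coprime and square-free, the number of (geometric) finite branch points is $\deg(ff_1f_2)=n+n_1+n_2$, each contributing $\tfrac{4}{2}(2-1)=2$.

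It remains to analyse the point at infinity, which is where the parity correction enters. By Remark~\ref{ram_infty}, a quadratic cover $y^2=h_i$ ramifies at infinity precisely when $\deg h_i$ is odd; hence $\varphi$ ramifies at infinity (so $e_\infty=2$, with contribution $2$) if and only if at least one of $\deg h_1=n+n_1$, $\deg h_2=n+n_2$, $\deg h_3=n_1+n_2$ is odd, and $e_\infty=1$ (contribution $0$) otherwise. Because $\deg h_1+\deg h_2+\deg h_3=2(n+n_1+n_2)$ is even, the number of odd degrees among the three is either $0$ or $2$; equivalently $e_\infty=2$ exactly when $n,n_1,n_2$ are not all of the same parity. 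Substituting into Riemann--Hurwitz gives
\[
2g(C)-2=-8+2(n+n_1+n_2)+2(e_\infty-1),
\]
that is $g(C)=n+n_1+n_2+e_\infty-4$, as claimed. Finally, running the same Riemann--Hurwitz computation on each degree-$2$ cover $C_i\to\mathbb{P}^1$ yields $g(C_i)=\tfrac12(\deg h_i+\epsilon_i)-1$, where $\epsilon_i\in\{0,1\}$ records the parity of $\deg h_i$; summing and using $\tfrac12\sum_i\epsilon_i=e_\infty-1$ recovers $g(C)=g(C_1)+g(C_2)+g(C_3)$.

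The main obstacle is the ramification bookkeeping rather than any deep input: one must check at each finite point that exactly one subextension remains unramified (so that $e_a=2$ and not $4$), and---more delicately---get the parity analysis at infinity right, keeping track of the fact that whether $\deg h_i$ is even or odd is governed by the parities of $n,n_1,n_2$ jointly. As a conceptual alternative for the additivity statement, one can instead invoke the isogeny decomposition $\Jac(C)\sim\Jac(C_1)\times\Jac(C_2)\times\Jac(C_3)$ for $V$-covers with rational base (Kani--Rosen), or equivalently decompose the cohomology $H^1(C,\mathbb{Q}_\ell)$ into its three nontrivial $V$-isotypic pieces---the trivial piece being $H^1(\mathbb{P}^1)=0$---each of dimension $2g(C_i)$ for the corresponding quotient; this gives $g(C)=\sum_i g(C_i)$ with no ramification computation at all.
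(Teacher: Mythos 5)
Your argument is correct, and its core is the same as the paper's: Riemann--Hurwitz for the degree-$4$ cover, with the key bookkeeping point that every branch point has inertia of order exactly $2$ because precisely one of the three quadratic subextensions stays unramified there. The one real difference is how you extract the additivity $g(C)=g(C_1)+g(C_2)+g(C_3)$: you first prove the degree formula and then sum the explicit genera of the $C_i$, whereas the paper gets additivity in one line, with no degree bookkeeping at all, from the counting identity $2|R_1\cup R_2\cup R_3|=|R_1|+|R_2|+|R_3|$ (each branch point of $\pi$ lies in exactly two of the $R_i$) combined with $g(C)=|R|-3$ and $g(C_i)=|R_i|/2-1$. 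That identity is worth internalizing, since it is what makes the additivity manifestly independent of the parity analysis at infinity; your Kani--Rosen/isotypic-decomposition alternative is the standard conceptual substitute and is also valid.

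One point deserves emphasis. Your parity analysis concludes that $e_\infty=2$ exactly when $n,n_1,n_2$ do \emph{not} all have the same parity (equivalently, when exactly two of $\deg h_1,\deg h_2,\deg h_3$ are odd), and $e_\infty=1$ when the three parities agree. This is correct, but it disagrees with the case division printed in the proposition, which asserts $e_\infty=2$ precisely when $n\equiv n_1\equiv n_2\equiv 0\pmod 2$. The printed condition cannot be right: for $n=n_1=n_2=2$ with $f,f_1,f_2$ generic, all three $h_i$ have even degree, so by Remark~\ref{ram_infty} infinity is unramified and $g(C)=g(C_1)+g(C_2)+g(C_3)=3$ forces $e_\infty=1$, not $2$; conversely, for $f=1$, $f_1=t$, $f_2=t-1$ (so $n=0$, $n_1=n_2=1$) one has $\deg h_1=1$ odd, infinity ramifies, and $g(C)=0$ forces $e_\infty=2$, while the printed rule would give $1$. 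So trust your version: the displayed definition of $e_\infty$ should read $e_\infty=1$ if $n\equiv n_1\equiv n_2\pmod 2$ and $e_\infty=2$ otherwise.
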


\begin{proof}
Let us denote by $R:=\mathrm{Ram}(\pi)$ the subset of all points of $\mathbb{P}^{1}_{\mathbb{F}_q}$ which are ramified in the cover $\pi: C\longrightarrow\mathbb{P}^{1}_{\mathbb{F}_q}$. Riemann-Hurwitz's formula (cf. \cite[Theorem 7.16]{Rosen02}) implies that $2g(C)-2=4(2\cdot 0-2)+2|R|$. That is, $g(C)=|R|-3$. Again, for the hyperelliptic cover 
$\pi_{i}:C_{i}\longrightarrow\mathbb{P}^{1}$ and the ramification sets $R_{i}:=\mathrm{Ram}(\pi_{i})$, we get $g(C_{i})=\frac{|R_{i}|}{2}-1$. Now, the definition of $h_3$ implies that
$$
2|R_{1}\cup R_{2}\cup R_{3}|=|R_{1}|+|R_{2}|+|R_{3}|.
$$
Thus, the formula $g(C)=g(C_{1})+g(C_{2})+g(C_{3})$ holds.

We can also apply Riemann-Hurwitz's formula to the morphism $\pi$, and so we have
$$
2g-2=4(2\cdot 0-2)+2\cdot(n_1+n_2+n_3+e_{\infty}-1).
$$
\end{proof}

Now, we introduce some sets of polynomials that will be useful:

$$
V_d=\{ F\in \mathbb{F}_q[t]:\,F\,\text{monic},\,\text{deg}(F)=d\},
$$
$$
\mathcal{F}_d=\{ F\in \mathbb{F}_q[t]:\,F\,\text{monic},\,\text{square-free},\,\text{deg}(F)=d\},
$$
$$
\widehat{\mathcal{F}}_d=\{ F\in \mathbb{F}_q[t]:\,F\,\text{square-free},\,\text{deg}(F)=d\},
$$
$$
\mathcal{F}_{(n,n_1,n_2)}=\{(f,f_1,f_2)\in \mathcal{F}_n\times \mathcal{F}_{n_1}\times \mathcal{F}_{n_2}:\,(f,f_1)=(f,f_2)=(f_1,f_2)=1\},
$$
$$
\widehat{\mathcal{F}}_{(n,n_1,n_2)}=\{(f,f_1,f_2)\in \mathcal{F}_n\times \widehat{\mathcal{F}}_{n_1}\times \widehat{\mathcal{F}}_{n_2}:\,(f,f_1)=(f,f_2)=(f_1,f_2)=1\},
$$
$$
\mathcal{F}_{[n,n_1,n_2]}=\mathcal{F}_{(n,n_1,n_2)}\cup \mathcal{F}_{(n-1,n_1,n_2)} \cup \mathcal{F}_{(n,n_1-1,n_2)}\cup \mathcal{F}_{(n,n_1,n_2-1)},
$$
$$
\widehat{\mathcal{F}}_{[n,n_1,n_2]}=\widehat{\mathcal{F}}_{(n,n_1,n_2)}\cup \widehat{\mathcal{F}}_{(n-1,n_1,n_2)} \cup \widehat{\mathcal{F}}_{(n,n_1-1,n_2)}\cup \widehat{\mathcal{F}}_{(n,n_1,n_2-1)}.
$$

\begin{defn} We denote by $\mathcal{B}_g(\mathbb{F}_{q})$ the family of biquadratic curves defined over $\mathbb{F}_{q}$ and of fixed genus $g$. It can be written as a disjoint union of subfamilies indexed by unordered $3$-tuples of positive integers $g_1,g_2,g_3$, i.e. 
$$
\mathcal{B}_g(\mathbb{F}_{q})=\bigcup_{g_1+g_2+g_3=g }\mathcal{B}_{(g_1,g_2,g_3)}(\mathbb{F}_{q}),
$$ 
where $\mathcal{B}_{(g_1,g_2,g_3)}(\mathbb{F}_{q})$ denotes the subfamily of biquadratic curves of genus $g=g_1+g_2+g_3$ such that the intermediate curves given by the morphism to $\mathbb{P}^1$ have genus equal to $g_1,g_2$ and $g_3$. This family is in bijection with the family of curves defined by elements in the set of polynomials $\widehat{\mathcal{F}}_{[n,n_{1},n_{2}]}$ such that $g_{i}=\lfloor \frac{n+n_{i}-1}{2}\rfloor$ for $i=1,2$ and $g_{3}=\lfloor \frac{n_1+n_2-1}{2}\rfloor$.

\end{defn}

The family $\mathcal{B}_{g}(\bar{\mathbb{F}}_{q})$ of biquadratic curves defined over $\bar{\mathbb{F}}_{q}$ is a coarse moduli space over $\mathbb{Z}[1/2]$ (cf.  \cite[Lemma 3.1]{Pries2005}). A detailed geometric study of this moduli space can be found in  \cite{Pries2005} and \cite{Pries2005b}.

\begin{remark} One has the following equalities:
$$
|\mathcal{B}_{(g_1,g_2,g_3)}(\mathbb{F}_{q})|\curprime=\sideset{}{\curprime}\sum_{\scriptscriptstyle C\in \mathcal{B}_{(g_1,g_2,g_3)}(\mathbb{F}_{q})} 1=\sum_{\scriptscriptstyle F\in \widehat{\mathcal{F}}_{[n,n_{1},n_{2}]}}\frac{1}{|\text{Aut}(C)|}=\frac{|\widehat{\mathcal{F}}_{[n,n_{1},n_{2}]}|}{q(q^2-1)},
$$
where the $'$ notation, applied both to cardinality and summation, means that each one of the curves $C$ in the moduli spaces is counted with
the usual weight $\frac{1}{|\text{Aut}(C)|}$.
\end{remark} 

\begin{remark}\label{symmetry} Notice that $|\widehat{\mathcal{F}}_{(n,n_1,n_2)}|=(q-1)^2|\mathcal{F}_{(n,n_1,n_2)}|$ and that we can see the set $\widehat{\mathcal{F}}_{(n,n_1,n_2)}$ as the set of the quadratic twists of elements in $\mathcal{F}_{(n,n_1,n_2)}$ given by the equations
$$
C':\begin{cases}y_{1}^{2}=\alpha_{1} ff_1(t)\\y_{2}^{2}=\alpha_{2} ff_2(t)\end{cases}
$$
where $\alpha_{1},\,\alpha_{2}\in\mathbb{F}_{q}^{*}$.
\end{remark}

\section{Monodromy group of the family}
A useful reference for this section is \cite[Ch. 9]{KS99}.
Let $S$ be an open set of $\mathrm{Spec}\,\mathbb{F}_{q}$ and let $\mathcal{C}\rightarrow S$ be a smooth proper morphism of schemes such that the geometric fibers $C_{x}\otimes\overline{\mathbb{F}}_{q}$ are smooth projective curves of genus $g$ over $\overline{\mathbb{F}}_{q}$. 

Fix a prime integer $\ell\neq p$. Then, there exists an $\ell$-adic representation 
$$
\rho_{\ell}:\pi_{1}(S)\longrightarrow\mathrm{GL}_{2g}(\overline{\mathbb{Q}}_{\ell})
$$
with the following interpolation property:
for every closed point $x:\mathrm{Spec}\,\mathbb{F}_{q}\longrightarrow S$ the induced representation
$$
\mathrm{Gal}(\overline{\mathbb{F}}_{q}/\mathbb{F}_{q})\simeq\pi_{1}(\mathrm{Spec}\,\mathbb{F}_{q})\longrightarrow\pi_{1}(S)
\longrightarrow\mathrm{GL}_{2g}(\overline{\mathbb{Q}}_{\ell})
$$
is isomorphic to the $\ell$-adic representation
$$
\rho_{C_{x},\ell}:\mathrm{Gal}(\overline{\mathbb{F}}_{q}/\mathbb{F}_{q})\longrightarrow\mathrm{Aut}(\mathrm{H}_{et}^{1}(C_{x}\otimes_{\mathbb{F}_{q}}\overline{\mathbb{F}}_{q},\overline{\mathbb{Q}}_{\ell}))\simeq\mathrm{GL}_{2g}(\overline{\mathbb{Q}_{\ell}}).
$$

Once an embedding $\iota:\overline{\mathbb{Q}}_{\ell}\hookrightarrow\mathbb{C}$ is fixed, we have a $2g$-dimensional complex representation $\iota\cdot\rho_{\ell}$. The image of this representation is a subgroup of $\mathrm{GL}_{2g}(\mathbb{C})$ called the monodromy group of the family.

For every integer $d\geq 1$, the set of polynomials $\mathcal{F}_{d}$ defined in Section \ref{sec:family} can be algebraically realized as a Zariski-open subset of $\mathbb{A}^{d}_{\mathbb{F}_{q}}$. This could be done redefining it in the following way:
$$
\mathcal{F}_{d}:=\{(a_{0},\dots,a_{d-1})\in\mathbb{A}^{d}_{\mathbb{F}_{q}}\mid\;D(a_{0},\dots,a_{d-1})\neq 0\},
$$

where $D:\mathbb{A}^{d}_{\mathbb{F}_{q}}\longrightarrow\mathbb{A}^{1}_{\mathbb{F}_{q}}$ is the continuos function such that $D(a_{0},a_{1},\dots,a_{d-1})$ denotes the discriminant of the monic polynomial $a_{0}+a_{1}t+\dots+t^{d}\in\mathbb{F}_{q}[t]$.

Let $\mathcal{H}_{g}$ denote the family of genus $g$ hyperelliptic curves over $\mathcal{F}_{d}$, whose fiber over the polynomial 
$F\in\mathcal{F}_{d}$ is given by the curve whose affine plane model is $y^{2}=F(t)$. In \cite[10.1]{KS99}, it is proved that the monodromy group either of the family $\mathcal{H}_{g}$ over $\mathcal{F}_{2g+1}$ and of the family $\mathcal{H}_{g}$ over $\mathcal{F}_{2g+2}$ is $G_{geom}=\mathrm{Sp}_{2g}(\mathbb{C})$.

\begin{proposition}
The monodromy group of the family $\mathcal{B}_{g_1,g_2,g_3}(\mathbb{F}_{q})$ is the biggest possible one, namely it is the symplectic group $\mathrm{Sp}_{2g}(\mathbb{C})$.
\end{proposition}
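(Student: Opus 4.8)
The plan is to run the Katz--Sarnak machinery \cite[Ch.~9]{KS99} for the lisse sheaf attached to the family. I would work over the main stratum $S=\mathcal{F}_{(n,n_1,n_2)}$, which by the description of $\mathcal{F}_d$ in Section~\ref{sec:family} is a smooth, geometrically connected open subscheme of $\mathbb{A}^{n+n_1+n_2}_{\mathbb{F}_q}$, and let $\calV=R^1\pi_*\overline{\mathbb{Q}}_\ell$ be the rank-$2g$ lisse sheaf whose geometric fibre at $x$ is $H^1_{\mathrm{et}}(C_x\otimes\overline{\mathbb{F}}_q,\overline{\mathbb{Q}}_\ell)$. Poincar\'e duality equips $\calV$ with a nondegenerate alternating autoduality (the cup product), so after fixing $\iota:\overline{\mathbb{Q}}_\ell\hookrightarrow\mathbb{C}$ the geometric monodromy group $G_{\mathrm{geom}}$, defined as the Zariski closure of the image of $\pi_1(S\otimes\overline{\mathbb{F}}_q)$, is contained in $\Sp_{2g}(\C)$. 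The entire content of the proposition is the reverse inclusion $G_{\mathrm{geom}}=\Sp_{2g}(\C)$, which I would attack by the transvection method.

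The key structural input is the classical theorem of Zalesskii--Serezhkin: a Zariski-closed subgroup of $\Sp_{2g}(\C)$ that acts \emph{irreducibly} on $\C^{2g}$ and is generated by symplectic transvections must equal $\Sp_{2g}(\C)$ (this is the form in which big symplectic monodromy is produced in \cite[Ch.~10]{KS99} and used for the hyperelliptic family in \cite[10.1]{KS99}). To exhibit transvections inside $G_{\mathrm{geom}}$ I would analyse the behaviour of $\calV$ along the discriminant and resultant walls bounding $S$: approaching a wall on which two branch points collide --- say a root of $f_1$ meets a root of $f_2$, so that $h_3=f_1f_2$ of Proposition~\ref{genus} acquires a double root --- the fibre acquires a single node and a single vanishing cycle, and the Picard--Lefschetz formula identifies the resulting local monodromy with a symplectic transvection. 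Letting the collision range over all such walls for each of the three factors $f,f_1,f_2$ produces, a priori, three conjugacy classes of transvections lying in $G_{\mathrm{geom}}$.

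The remaining step, which I expect to be the main obstacle and the genuine heart of the proof, is to establish irreducibility of $\calV$ as a $G_{\mathrm{geom}}$-representation; granting it, the normal subgroup generated by the transvections above is all of $\Sp_{2g}(\C)$ and the proposition follows. The difficulty is intrinsic. The deck group $\Gal(\mathbb{F}_q(C)/\mathbb{F}_q(t))\cong(\mathbb{Z}/2\mathbb{Z})^2$ acts on every fibre by automorphisms that are constant along $S$ and therefore commute with monodromy, so $\calV$ carries an a priori $G_{\mathrm{geom}}$-stable isotypic decomposition $\calV=\calV_1\oplus\calV_2\oplus\calV_3$ into the three hyperelliptic pieces of ranks $2g_1,2g_2,2g_3$ coming from $C_1,C_2,C_3$ (Proposition~\ref{genus}), and the transvections found above are block-diagonal for this splitting. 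Proving the statement as worded therefore forces one to show that geometric monodromy nonetheless does \emph{not} preserve this decomposition, i.e.\ that it mixes the three summands; the only available leverage is the common factor $f=(h_1,h_2)$ shared by $C_1$ and $C_2$ together with the coupling among the three subcovers recorded in Remark~\ref{coprime}, and it is exactly this entanglement between the three hyperelliptic pieces that must be controlled in order to identify $G_{\mathrm{geom}}$ with the full symplectic group $\Sp_{2g}(\C)$.
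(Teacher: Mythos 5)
Your closing observation is not merely ``the main obstacle'': it is fatal, both to your own strategy and to the statement as printed, and you should have drawn that conclusion. As you correctly argue, the deck group $(\mathbb{Z}/2\mathbb{Z})^2$ acts fibrewise over the whole base, so the character eigenspace decomposition $\calV=\calV_1\oplus\calV_2\oplus\calV_3$ into pieces of ranks $2g_1,2g_2,2g_3$ (with the cup product restricting nondegenerately to each piece) is stable under the entire image of $\pi_1$; hence
$G_{\mathrm{geom}}\subseteq \mathrm{Sp}_{2g_1}(\C)\times \mathrm{Sp}_{2g_2}(\C)\times \mathrm{Sp}_{2g_3}(\C)$,
which is a \emph{proper} subgroup of $\mathrm{Sp}_{2g}(\C)$ as soon as at least two of the $g_i$ are positive. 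Your last paragraph then asks to show that monodromy ``does not preserve this decomposition,'' which contradicts what you just proved: that step is not hard, it is impossible, and the irreducibility hypothesis needed for the Zalesskii--Serezhkin/transvection criterion can never be supplied. The statement that your method (transvections from branch-point collisions, applied blockwise on each $\calV_i$, plus a Goursat-type argument to rule out diagonal correlations between blocks, e.g.\ when $g_i=g_j$) could actually prove is $G_{\mathrm{geom}}=\mathrm{Sp}_{2g_1}(\C)\times\mathrm{Sp}_{2g_2}(\C)\times\mathrm{Sp}_{2g_3}(\C)$, i.e.\ ``biggest possible'' only relative to the constraint imposed by the Klein-group action.

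For comparison, the paper's own proof takes a different and even weaker route: it freezes $f_1,f_2$, lets $f$ vary, invokes the hyperelliptic result of \cite[10.1]{KS99} on that slice, and concludes that the monodromy of the whole family ``can only increase.'' But on such a slice the quotient $C_3\colon y^2=f_1f_2$ is a constant curve, so the slice contributes monodromy lying in $\mathrm{Sp}_{2g_1}(\C)\times\mathrm{Sp}_{2g_2}(\C)\times\{1\}$ at most, and however much the full family ``increases'' it remains capped by the product bound above --- exactly the gap your decomposition argument exposes. So the paper's proof does not establish the proposition, and the proposition itself overshoots. This is corroborated internally: by Theorem \ref{main} one has $\mathbb{E}(X_i^2)=3(q+1)/(q+3)$, so the normalized trace has limiting variance $3$, which matches three independent symplectic blocks and not the full group $\mathrm{Sp}_{2g}(\C)$ (variance $1$); likewise the $q$-limit averages in the ensuing corollary should be $-3\eta_m$ rather than $-\eta_m$ in the range $1\leq |m|\leq 2\min(g_1,g_2,g_3)$. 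In short: your proposal is incomplete as a proof, but its ``obstacle'' is a correct refutation of the claim, whereas the paper's argument silently passes over it.
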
  

\begin{proof}
The set of polynomials $\mathcal{F}_{[n,n_1,n_2]}$ defined in Section \ref{sec:family} can be realized as a Zariski-open subset of $\mathbb{A}^{n}_{\mathbb{F}_{q}}\times\mathbb{A}^{n_{1}+1}_{\mathbb{F}_{q}}\times\mathbb{A}^{n_{2}+1}_{\mathbb{F}_{q}}$.

The family of genus $g$ curves over $\mathcal{F}_{[n,n_1,n_2]}$, whose fiber over the $3$-tuple $(f,f_1,f_2)\in \mathcal{F}_{[n,n_1,n_2]}$ is given by the curve whose affine model is $y_{1}^{2}=ff_{1}(t),y_{2}^{2}=ff_{2}(t)$, is exactly the subfamily of genus $g$ biquadratic curves $\mathcal{B}_{g_{1},g_{2},g_{3}}(\mathbb{F}_{q})$ defined in Section \ref{sec:family}.

Let $N:=\mathrm{max}(n,n_2,n_3)$. By the symmetry of the parametrization we can assume for example that $N=n$ and then we fix two square-free polynomials $f_1,f_2$ of degrees $n_1,n_2$ such that $f_1f_2$ is also square-free. Therefore, we can consider the open immersion
$$
\{f\in\mathcal{F}_{n}:\, ff_1,ff_2\;\mathrm{square-free}\}\longrightarrow \mathcal{F}_{[n,n_1,n_2]}:\, f \longmapsto (f,f_1,f_2).
$$
The monodromy group of the family $\mathcal{H}_{g}$ of hyperelliptic curves over this subset of $\mathcal{F}_{n}$ is the same as if we consider the family over all $\mathcal{F}_{n}$. Finally, the monodromy group of the family $\mathcal{B}_{(g_1,g_2,g_3)}(\mathbb{F}_{q})$ can only increase and, after results of \cite[10.1]{KS99}, it is the biggest possible one. 
\end{proof}

Applying Deligne's equidistribution theorem (cf. \cite[9.3,9.2]{KS99}) and random matrix theory (cf. \cite[4]{DS94}), we have the following distribution result at the $q$-limit for the family $\mathcal{B}_{(g_1,g_2,g_3)}(\mathbb{F}_{q})$.

\begin{cor}
Let $g\geq 3$ be a fixed integer. When $q$ goes to $\infty$, the classes of the Frobenius automorphisms $\{\mathrm{Frob}_{C}\}_{C\in\mathcal{B}_{(g_1,g_2,g_3)}(\mathbb{F}_{q})}$ acting on the first \'etale cohomology group $\mathrm{H}^{1}_{\text{\'et}}(C,\mathbb{Q}_{\ell})$ are equidistributed with respect to the Haar mesure associated to the maximal compact subgroup of $\mathrm{Sp}_{2g}(\mathbb{C})$, i.e.
$$
\lim_{q\to\infty}\langle\mathrm{Tr}\,\mathrm{Frob}^{m}_{C}\rangle=\left\{\begin{array}{ll} 2g & m=0\\ -\eta_{r} & 1\leq|m|\leq 2g \\ 0 & |m|>2g
\end{array}\right.
$$
where
$$
\eta_{m}:=\left\{\begin{array}{ll} 1 & m\;\mathrm{even}\\ 0 & m\;\mathrm{odd}
\end{array}\right..
$$
\end{cor}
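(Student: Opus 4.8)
The plan is to combine the monodromy computation of the preceding Proposition with Deligne's equidistribution theorem and a standard random-matrix integral over the symplectic group; in effect, the Corollary is the assembly of three inputs, two of them already available to us by citation.

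First I would pass from the geometric monodromy group to the arithmetic one. The Proposition gives $G_{\mathrm{geom}}=\mathrm{Sp}_{2g}(\mathbb{C})$ for the lisse sheaf $R^{1}\pi_{*}\overline{\mathbb{Q}}_{\ell}$ on $\mathcal{F}_{[n,n_1,n_2]}$. The cup-product on $\mathrm{H}^{1}_{\text{\'et}}$ is the Weil pairing, valued in $\overline{\mathbb{Q}}_{\ell}(-1)$, so $G_{\mathrm{arith}}$ preserves a symplectic form up to the cyclotomic similitude factor; after unitarizing (dividing eigenvalues by $q^{1/2}$) the image still preserves the pairing and is therefore contained in $\mathrm{Sp}_{2g}(\mathbb{C})$, while it contains $G_{\mathrm{geom}}=\mathrm{Sp}_{2g}(\mathbb{C})$. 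This forces $G_{\mathrm{arith}}=G_{\mathrm{geom}}=\mathrm{Sp}_{2g}(\mathbb{C})$, which is exactly the hypothesis licensing the clean form of Deligne's theorem.

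Next I would invoke Deligne's equidistribution theorem (cf. \cite[9.2, 9.3]{KS99}). By the Riemann Hypothesis for curves the eigenvalues of $\mathrm{Frob}_C$ on $\mathrm{H}^{1}_{\text{\'et}}(C,\overline{\mathbb{Q}}_{\ell})$ have absolute value $q^{1/2}$, so the unitarized Frobenius $\vartheta_{C}:=q^{-1/2}\,\mathrm{Frob}_C$ defines a conjugacy class in the maximal compact subgroup $\mathrm{USp}(2g)\subset\mathrm{Sp}_{2g}(\mathbb{C})$. The theorem asserts that, as $q\to\infty$, the classes $\{\vartheta_{C}\}_{C\in\mathcal{B}_{(g_1,g_2,g_3)}(\mathbb{F}_{q})}$ become equidistributed in the space of conjugacy classes of $\mathrm{USp}(2g)$ with respect to the direct image of normalized Haar measure. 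Applying this to the continuous central function $M\mapsto\mathrm{Tr}(M^{m})$, which is precisely $\mathrm{Tr}\,\mathrm{Frob}_{C}^{m}$ once the eigenvalues are read on the unit circle, yields
$$
\lim_{q\to\infty}\langle\mathrm{Tr}\,\mathrm{Frob}_{C}^{m}\rangle=\int_{\mathrm{USp}(2g)}\mathrm{Tr}(M^{m})\,dM.
$$

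Finally I would evaluate the integral. Parametrizing the eigenvalues of $M\in\mathrm{USp}(2g)$ as $e^{\pm i\theta_{1}},\dots,e^{\pm i\theta_{g}}$ one has $\mathrm{Tr}(M^{m})=2\sum_{j=1}^{g}\cos(m\theta_{j})$, and the Weyl integration formula reduces the average to an explicit trigonometric integral over $[0,\pi]^{g}$; expanding the power sum against the irreducible symplectic characters and using their orthogonality is exactly the moment computation of Diaconis--Shahshahani \cite{DS94}. This produces $2g$ for $m=0$, the value $-1$ when $m$ is even with $2\le|m|\le 2g$, and $0$ when $m$ is odd or when $|m|>2g$; that is, $-\eta_{m}$ in the stated range and $0$ beyond it (the $\eta_{r}$ in the displayed statement being a misprint for $\eta_{m}$), with the negative-$m$ case following since $\mathrm{Tr}(M^{-m})=\mathrm{Tr}(M^{m})$ on $\mathrm{USp}(2g)$. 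I expect the only genuinely computational step to be this last integral, but since it is the known symplectic moment formula the substantive work is confined to citing \cite{DS94}; the sole conceptual point requiring care is the identification $G_{\mathrm{arith}}=G_{\mathrm{geom}}$ from the first step.
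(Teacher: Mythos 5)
Your proposal is correct and follows exactly the route the paper takes: the paper offers no written proof beyond the sentence preceding the Corollary, which cites Deligne's equidistribution theorem \cite[9.2, 9.3]{KS99} together with the symplectic moment computation of \cite{DS94}, and your argument is a faithful expansion of precisely those two inputs (plus the standard $G_{\mathrm{arith}}=G_{\mathrm{geom}}$ verification that licenses the clean form of Deligne's theorem, and the correct reading of $\eta_r$ as a misprint for $\eta_m$). Nothing further is needed.
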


\section{The number of points over $\mathbb{F}_{q}$}\label{distributionFq}
Let $\chi$ denote the quadratic character in $\mathbb{F}_q$. We set, for any element $(f,f_1,f_2)$ in $\widehat{\mathcal{F}}_{(n,n_1,n_2)}$, 
$$
S(f,f_1,f_2)=\sum_{x\in\mathbb{F}_q}(\chi(f\cdot f_1(x))+\chi(f\cdot f_2(x))+\chi(f_1\cdot f_2(x))) \text{, and}
$$
$$
\widehat{S}(f,f_1,f_2)=\sum_{x\in\mathbb{P}^{1}(\mathbb{F}_q)}(\chi(f\cdot f_1(x))+\chi(f\cdot f_2(x))+\chi(f_1\cdot f_2(x))),
$$
where for the point at infinity we define
$$
\chi(F(\infty))=\begin{cases}0&\text{deg}(F)\;\text{odd}\\
                                              1&\text{deg}(F)\;\text{even, leading coefficient is a square in }\mathbb{F}_q\\
                                               -1&\text{deg}(F)\;\text{even, leading coefficient is not a square in }\mathbb{F}_q\end{cases}.
$$
Then, for a curve $C\in\mathcal{B}_{(g_1,g_2,g_3)}(\mathbb{F}_q)$ defined by a $3$-tupla $(f,f_1,f_2)$ we have that
$$
\#C(\mathbb{F}_q)=q+1+\widehat{S}(f,f_1,f_2).
$$
Hence, we have the equality
$$
\frac{|\{C\in\mathcal{B}_{(g_1,g_2,g_3)}(\mathbb{F}_q):\,\text{Tr}(\text{Frob}_C)=-M\}|\curprime}{|\mathcal{B}_{(g_1,g_2,g_3)}(\mathbb{F}_q)|\curprime}=
\frac{|\{(f,f_1,f_2)\in\widehat{\mathcal{F}}_{[n,n_1,n_2]}:\,\widehat{S}(f,f_1,f_2)=M\}|}{|\widehat{\mathcal{F}}_{[n,n_1,n_2]}|}.
$$

The goal of this section is to prove the following theorem.

\begin{thm}\label{main} If the three degrees $n,n_{1},n_{2}$ go to infinity, then we have 
$$
\frac{|\{(f,f_1,f_2)\in\widehat{\mathcal{F}}_{[n,n_1,n_2]}:\,\widehat{S}(f,f_1,f_2)=M\}|}{|\widehat{\mathcal{F}}_{[n,n_1,n_2]}|}=
\mathrm{Prob}\left(\sum_{j=1}^{q+1}X_{j}=M\right),
$$
where the $X_{j}$ are i.i.d. random variables such that
$$
X_{i}=\left\{\begin{array}{ccc}-1 & \rm{with\,probability}& \frac{3(q+2)}{4(q+3)}\\
\,&\,&\,\\
                                                 1 & \rm{with\,probability}& \frac{6}{4(q+3)}\\
\,&\,&\,\\                                                 
                                              3 & \rm{with\,probability}& \frac{q}{4(q+3)}
\end{array}\right..$$
More precisely,
$$
\frac{|\{(f,f_1,f_2)\in\widehat{\mathcal{F}}_{[n,n_1,n_2]}:\,\widehat{S}(f,f_1,f_2)=M\}|}{|\widehat{\mathcal{F}}_{[n,n_1,n_2]}|}
=\mathrm{Prob}\left(\sum_{j=1}^{q+1}X_{j}=M\right)
\left(1+ O(q^{-\frac{(1-\epsilon)}{2}\text{min}(n,n_1,n_2)+q})\right).
$$
\end{thm}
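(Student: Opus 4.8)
The plan is to show that the $q+1$ local contributions to $\widehat S$, one per point of $\mathbb{P}^1(\mathbb{F}_q)$, become independent with each distributed as a single $X_j$. Writing $\widehat S(f,f_1,f_2)=\sum_{x\in\mathbb{P}^1(\mathbb{F}_q)}Y_x$ with local contribution $Y_x=\chi(ff_1(x))+\chi(ff_2(x))+\chi(f_1f_2(x))$, the first observation is that, since $f,f_1,f_2$ are pairwise coprime, at every $x$ at most one of them vanishes; hence the local type $\tau_x=(\chi(f(x)),\chi(f_1(x)),\chi(f_2(x)))$ ranges over the vectors of $\{-1,0,1\}^3$ with at most one zero coordinate, and $Y_x$ depends only on $\tau_x$ (equal to $3$ when all three coordinates agree in sign, to $\chi(f_if_j(x))=\pm1$ when one coordinate vanishes, and to $-1$ otherwise). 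I would first reduce the theorem to the statement that, for every admissible assignment $(\tau_x)_{x\in\mathbb{P}^1(\mathbb{F}_q)}$, the proportion of triples of $\widehat{\mathcal{F}}_{[n,n_1,n_2]}$ realizing all of them equals $\prod_x\rho(\tau_x)$ up to the announced relative error, where $\rho$ is a fixed local density on types. Summing over all configurations with $\sum_xY_x=M$ then identifies the left-hand side with $\mathrm{Prob}(\sum_{j=1}^{q+1}X_j=M)$, provided the push-forward of $\rho$ under $\tau\mapsto Y$ is the law of $X_j$.

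Next I would compute $\rho$ at a finite point $x\in\mathbb{A}^1(\mathbb{F}_q)$. For a single squarefree polynomial of large degree, a short recursion (equivalently, the local factor $1-|P|^{-2}$ of the squarefree density at the prime $P=(t-x)$) gives $\mathrm{Prob}(f(x)=0)\to\frac{1}{q+1}$ and $\mathrm{Prob}(\chi(f(x))=\pm1)\to\frac{q}{2(q+1)}$. Pairwise coprimality amounts, at $P=(t-x)$, to forbidding that two of $f,f_1,f_2$ be divisible by $P$; renormalizing the product of the three single-polynomial local factors over this event yields probability $\frac{q}{q+3}$ that none of them vanishes and $\frac{1}{q+3}$ that a prescribed one of them vanishes, the signs of the non-vanishing polynomials being independent and uniform in $\{\pm1\}$ (the uniformity of the signs of $f_1,f_2$ being immediate from the quadratic twists of Remark~\ref{symmetry}, that of $f$ following from the equidistribution of its values). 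Evaluating $Y_x$ against these densities reproduces the law of $X_j$: the two all-equal sign patterns contribute $\mathrm{Prob}(Y_x=3)=\frac{q}{4(q+3)}$, the single-vanishing types split evenly into $Y_x=\pm1$ giving $\mathrm{Prob}(Y_x=1)=\frac{6}{4(q+3)}$, and the six remaining non-vanishing patterns together with the negative half of the single-vanishing types give $\mathrm{Prob}(Y_x=-1)=\frac{3(q+2)}{4(q+3)}$.

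I would then check that the point at infinity contributes another independent copy of $X_j$, which is the purpose of the four-fold union defining $\widehat{\mathcal{F}}_{[n,n_1,n_2]}$. By Remark~\ref{ram_infty} the intermediate cover $y^2=ff_i$ ramifies at infinity exactly when $\deg(ff_i)$ is odd, and for the family to parametrize curves of a fixed genus the leading configuration $(n,n_1,n_2)$ must have $n+n_1,n+n_2,n_1+n_2$ even, so that no cover ramifies at infinity; each of the three configurations with one degree lowered by one then makes exactly one pair of covers ramify. The leading configuration has relative size $\frac{q}{q+3}$ and the three others $\frac{1}{q+3}$ each, matching the finite-point densities for ``no vanishing'' and for a single vanishing; moreover the leading coefficients of $f_1,f_2$, hence the signs $\chi(\cdot(\infty))$ of the unramified covers, are uniform through the twists. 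Thus $Y_\infty$ is distributed exactly as at a finite point, so that all $q+1$ variables share the law of $X_j$.

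The technical heart, and the step I expect to be the main obstacle, is proving the joint equidistribution of types with the power-saving relative error $O(q^{-\frac{(1-\epsilon)}{2}\min(n,n_1,n_2)+q})$. I would detect the prescribed types at the chosen points through the quadratic character $\chi$ and through divisibility by the corresponding linear factors, and dispose of the squarefree and pairwise-coprimality constraints by M\"obius inversion over $\mathbb{F}_q[t]$. This presents the count as a main term, equal to $\prod_x\rho(\tau_x)\cdot|\widehat{\mathcal{F}}_{[n,n_1,n_2]}|$, plus character sums of the shape $\sum_f\chi(f(x_1))\cdots\chi(f(x_k))$ over squarefree $f$ in fixed residue classes, which the Weil bound estimates with square-root cancellation, producing the factor $q^{-\frac{(1-\epsilon)}{2}\min(n,n_1,n_2)}$; the smallest of the three degrees governs the saving because the weakest of the three character sums does. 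The delicate points are to keep these estimates uniform in the local configuration, to truncate the M\"obius sums so that the error accumulated over all (exponentially many) configurations and over all $q+1$ points stays of the stated size---this simultaneous detection at $q+1$ points being the source of the additive $+q$ in the exponent---and to use the coprimality of $f,f_1,f_2$ once more to prevent the three covers from conspiring and cancelling the saving.
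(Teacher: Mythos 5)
Your combinatorial and probabilistic superstructure coincides with the paper's: the reduction to joint equidistribution of the local types $\bigl(\chi(f(x)),\chi(f_1(x)),\chi(f_2(x))\bigr)$ with at most one vanishing coordinate, the local densities $\frac{q}{q+3}$ (no vanishing) and $\frac{1}{q+3}$ (a prescribed vanishing) with uniform independent signs, the resulting identification of the push-forward with the law of $X_j$, and the treatment of $\infty$ via the four components of $\widehat{\mathcal{F}}_{[n,n_1,n_2]}$ of relative sizes $q:1:1:1$ together with the twists of Remark \ref{symmetry} all match what the paper establishes in Corollaries \ref{corprefinal}--\ref{superfinal} and uses in the final summation over sign configurations. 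Where you genuinely diverge is the analytic engine behind the equidistribution. The paper never touches character sums or the Weil bound: it counts square-free polynomials with \emph{prescribed values} at the $x_i$ (Lemma \ref{S}, imported from \cite{Betal09}), imposes coprimality of the pair $(f_1,f_2)$ by inclusion--exclusion over the gcd $D$ (Lemma \ref{guay}), and then sums over the third polynomial $f$ by evaluating the Dirichlet series $G(w)=\sum_U c(U)|U|^{-w}$ with a function-field Tauberian theorem (Proposition \ref{final}) --- this last step is the actual source of the $\epsilon$-loss in the exponent, and the restriction $l\le q+1$ is the source of the $+q$. Your route via $\sum_f\chi(f(x_1))\cdots\chi(f(x_k))$ and square-root cancellation is a known viable alternative in this literature, but it is heavier (it needs reciprocity and RH for the $L$-functions of the characters $\bigl(\tfrac{\cdot}{\prod(t-x_i)}\bigr)$ where the paper needs only elementary sieving), and the step you yourself flag as the main obstacle --- keeping the three character sums decoupled while enforcing pairwise coprimality of $f,f_1,f_2$, uniformly in the configuration --- is exactly the content of Lemma \ref{guay} and Proposition \ref{final} and is not carried out in your sketch. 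As written, your argument is a correct plan whose probabilistic half is complete and whose analytic half would still need either your Weil-bound computation to be executed in full (including the constants $K$, $L$ recombining into $\bigl(\frac{q}{(q-1)^2(q+3)}\bigr)^l$) or to be replaced by the paper's elementary counting lemmas.
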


The proof of this Theorem runs similarly to the proof of the equivalent statement for hyperelliptic curves (resp. $l-$cyclic covers) in \cite{KR09} (resp. \cite{Betal09}).

\begin{lemma}\label{S} $($\cite[Lemma 4.2]{Betal09}$)$ For $0\leq l\leq q$, let $x_1,...,x_l$ be distinct elements of $\mathbb{F}_q$. Let $U\in\mathbb{F}_q[t]$ be such that $U(x_i)\neq 0$ for $i=0,...,l$. Let $a_1,...,a_l$ be elements of $\mathbb{F}_{q}^{*}$.Then the cardinality of the set 
$$
\mathcal{S}^{U}_{d}(a_{1},\dots,a_{l}):=\{F\in\mathcal{F}_d:\,(F,U)=1,\,F(x_i)=a_i,\,1\leq i\leq l\}
$$
is the number
$$
S^{U}_{d}(l)=\frac{q^{d}}{\zeta_q(2)}\left(\frac{q}{q^{2}-1}\right)^{l}\prod_{P|U}(1+|P|^{-1})^{-1}\left(1+O(q^{l-d/2})\right).
$$
\end{lemma}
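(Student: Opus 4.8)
The plan is to detect the square-free condition by Möbius inversion and then evaluate the resulting value-and-coprimality count exactly by the Chinese Remainder Theorem. Using $\mathbf{1}[F\ \text{square-free}] = \sum_{A^{2}\mid F}\mu(A)$ and substituting $F = A^{2}G$ with $G$ monic of degree $e := d - 2\deg(A)$, I would first rewrite
\[
|\mathcal{S}^{U}_{d}(a_{1},\dots,a_{l})| \;=\; \sum_{A}\mu(A)\,N(A),
\]
where $A$ ranges over monic square-free polynomials with $\deg(A)\le d/2$, $(A,U)=1$ and $A(x_{i})\neq 0$ for all $i$ (any other $A$ forces $F(x_{i})=0$ or $(F,U)\neq 1$, hence contributes nothing), and $N(A)$ counts monic $G$ of degree $e$ with $(G,U)=1$ and $G(x_{i})=a_{i}/A(x_{i})^{2}$ for $1\le i\le l$.

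The next step is to evaluate $N(A)$ exactly. The value conditions are congruences $G\equiv a_{i}/A(x_{i})^{2}\pmod{t-x_{i}}$ modulo the $l$ distinct linear primes $t-x_{i}$, while $(G,U)=1$ is handled by inclusion--exclusion over square-free divisors $D\mid U$. Since $U(x_{i})\neq 0$, all the $D$ and the $t-x_{i}$ are pairwise coprime, so CRT collapses the conditions into a single congruence modulo $M_{D}:=D\prod_{i}(t-x_{i})$, of degree $\deg(D)+l$. The number of monic polynomials of degree $e$ lying in a fixed class modulo $M_{D}$ is exactly $q^{\,e-\deg(D)-l}$ whenever $e\ge \deg(D)+l$ (the affine reduction map from monic degree-$e$ polynomials to $\mathbb{F}_{q}[t]/(M_{D})$ is surjective with uniform fibres), and is $O(1)$ otherwise. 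Summing $\sum_{D\mid U}\mu(D)$ then gives, for $e\ge l+\deg(U)$, the clean value $N(A)=q^{\,e-l}\prod_{P\mid U}(1-|P|^{-1})$.

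Summing over $A$ in the range $\deg(A)\le (d-l-\deg U)/2$, I would factor out $q^{\,d-l}\prod_{P\mid U}(1-|P|^{-1})$ and complete the series $\sum_{A}\mu(A)|A|^{-2}$ over $A$ coprime to $W:=U\prod_{i}(t-x_{i})$ to the full Euler product
\[
\sum_{(A,W)=1}\frac{\mu(A)}{|A|^{2}} \;=\; \frac{1}{\zeta_{q}(2)}\prod_{P\mid W}\bigl(1-|P|^{-2}\bigr)^{-1}.
\]
Because $|t-x_{i}|=q$, the $l$ linear factors contribute $(1-q^{-2})^{-l}=(q^{2}/(q^{2}-1))^{l}$; combining this with the $q^{-l}$ already present and using $(1-|P|^{-1})(1-|P|^{-2})^{-1}=(1+|P|^{-1})^{-1}$ on the primes dividing $U$, the main term collapses to exactly $\tfrac{q^{d}}{\zeta_{q}(2)}\bigl(\tfrac{q}{q^{2}-1}\bigr)^{l}\prod_{P\mid U}(1+|P|^{-1})^{-1}$, as claimed.

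The main obstacle is the error analysis, which has two sources. First, completing the Euler product discards a tail $\sum_{\deg A > (d-l-\deg U)/2}|\mu(A)|\,|A|^{-2}=O\!\bigl(q^{-(d-l-\deg U)/2}\bigr)$, contributing $O\!\bigl(q^{(d-l+\deg U)/2}\bigr)$ in absolute terms. Second, in the complementary range $e<l+\deg U$ the CRT count degenerates; here the naive bound $N(A)\le q^{e}$ is too weak, and the decisive point is to exploit the \emph{over-determinacy} of the $l$ value conditions to get $N(A)\le q^{\max(e-l,0)}=O(q^{\deg U})$, which against the $O(q^{d/2})$ admissible $A$ yields $O\!\bigl(q^{d/2+\deg U}\bigr)$. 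Treating $\deg U$ as bounded (as it is in every application to Theorem~\ref{main}), both contributions are $O(q^{d/2})$ against a main term of size $\asymp q^{\,d-l}$, giving the stated relative error $O(q^{\,l-d/2})$.
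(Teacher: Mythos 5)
The paper does not actually prove this lemma: it is imported verbatim from \cite[Lemma 4.2]{Betal09} (the $U=1$ case goes back to \cite{KR09}), so there is no in-paper argument to compare against. Your proof is correct and is essentially the standard argument from that literature: the square-free sieve $F=A^{2}G$, CRT to merge the $l$ interpolation conditions at the $x_i$ with the divisibility $D\mid G$ coming from inclusion--exclusion over divisors of $U$, the exact fibre count $q^{e-\deg D-l}$ for monic polynomials in a fixed residue class, and completion of the Euler products. The main-term bookkeeping checks out: $q^{-l}(1-q^{-2})^{-l}=\bigl(q/(q^{2}-1)\bigr)^{l}$ and $(1-|P|^{-1})(1-|P|^{-2})^{-1}=(1+|P|^{-1})^{-1}$ produce exactly the stated constant, and $\sum_{A}\mu(A)|A|^{-2}$ over $A$ coprime to $W=U\prod_i(t-x_i)$ is correctly identified as $\zeta_q(2)^{-1}\prod_{P\mid W}(1-|P|^{-2})^{-1}$.

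The one point to flag is your closing remark that $\deg U$ is bounded ``in every application.'' It is not: Lemma \ref{guay} invokes this lemma with modulus $UD$ where $\deg D$ ranges up to $\min(n_1,n_2)$, and Proposition \ref{final} then sums over $U=f\in\mathcal{F}_n$ with $n\to\infty$. Your own estimates do record the $U$-dependence explicitly --- $O\bigl(q^{(d-l+\deg U)/2}\bigr)$ from completing the Euler product and $O\bigl(q^{d/2+\deg U}\bigr)$ from the degenerate CRT range --- so nothing in your argument is wrong; but the conclusion $O(q^{l-d/2})$ then holds only with an implied constant depending on $U$, or after inserting a loss of the shape $q^{\epsilon\deg U}$ (which is how such losses are ultimately absorbed into the $q^{\epsilon n}$ error of Proposition \ref{final}). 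This uniformity issue is inherited from the statement of the source lemma rather than introduced by your proof, but it deserves an explicit remark rather than the claim that $\deg U$ stays bounded.
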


\begin{lemma}\label{guay}  For $0\leq l\leq q$ let $x_1,...,x_l$ be distinct elements of $\mathbb{F}_q$. Let $U\in\mathbb{F}_q[t]$ be such that $U(x_i)\neq 0$ for $i=0,...,l$. Let  $a_1,...,a_l,b_1,...,b_l$ be elements of $\mathbb{F}_{q}^{*}$. Then the cardinality of the set 
$$
\mathcal{R}^{U}_{n_1,n_2}(a_{1},\dots,a_{l},b_{1},\dots,b_{l}):=\{(f_1,f_2)\in\mathcal{F}_{n_1}\times\mathcal{F}_{n_2}:\,(f_i,U)=(f_1,f_2)=1,\,f_1(x_i)=a_i,\,f_2(x_i)=b_i, \,1\leq i\leq l \}
$$
is the number
$$
R^{U}_{n_1,n_2}(l)=\frac{q^{n_1+n_2}L}{\zeta_{q}^{2}(2)}\left(\frac{q}{(q-1)^{2}(q+2)}\right)^{l}\prod_{P|U}\left(\frac{1}{1+2|P|^{-1}}\right)\left(1+O(q^{l-\frac{\text{min}(n_1,n_2)}{2}})\right),
$$
where $L:= \prod_{P\,\text{prime}}(1-\frac{|P|^{-2}}{(1+|P|^{-1})^2})$.
\end{lemma}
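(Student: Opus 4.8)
The plan is to count pairs $(f_1,f_2)\in\mathcal{F}_{n_1}\times\mathcal{F}_{n_2}$ subject to the coprimality conditions $(f_i,U)=1$, $(f_1,f_2)=1$, and the prescribed-value conditions $f_1(x_i)=a_i$, $f_2(x_i)=b_i$, by the same sieve-theoretic / generating-function machinery used to prove Lemma \ref{S}, but now adapted to the \emph{joint} constraint that $f_1$ and $f_2$ are simultaneously square-free and coprime to each other. First I would encode the count in a (multiple) Dirichlet series in two variables $u_1,u_2$ tracking $\deg f_1$ and $\deg f_2$, so that $R^U_{n_1,n_2}(l)$ is extracted as the coefficient of $u_1^{n_1}u_2^{n_2}$. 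The key is to factor this Dirichlet series as an Euler product over monic irreducibles $P\in\mathbb{F}_q[t]$. At each prime $P$ not dividing $U$ and not among the $x_i$, the local factor records the three mutually exclusive possibilities that are allowed by square-freeness and coprimality: $P\nmid f_1f_2$, or $P\,\|\,f_1$ with $P\nmid f_2$, or $P\,\|\,f_2$ with $P\nmid f_1$ (the term $P\mid f_1$ and $P\mid f_2$ being forbidden by $(f_1,f_2)=1$, and higher powers forbidden by square-freeness). This is exactly what produces the factor $(1-|P|^{-2}/(1+|P|^{-1})^2)$ aggregated into the constant $L$, and the local factor $(1+2|P|^{-1})^{-1}$ at primes dividing $U$.

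Next I would isolate the contribution of the finitely many \emph{special} primes: the linear primes $(t-x_i)$ carrying the prescribed-value conditions, and the primes dividing $U$. At each $x_i$ the condition $f_1(x_i)=a_i\neq 0$, $f_2(x_i)=b_i\neq 0$ forces $(t-x_i)\nmid f_1f_2$ and moreover pins down one residue class out of $q-1$ for each of $f_1,f_2$; after accounting for the $(q+2)$ weighting coming from the Euler factor at $(t-x_i)$ in the coprime-square-free count, these $l$ local conditions together contribute the factor $\left(\frac{q}{(q-1)^2(q+2)}\right)^{l}$. The primes $P\mid U$ simply remove the $P\mid f_i$ branches, giving $\prod_{P\mid U}(1+2|P|^{-1})^{-1}$. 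The main term $q^{n_1+n_2}L/\zeta_q(2)^2$ is then read off from the pole of the ``generic'' part of the Euler product, exactly as the $q^d/\zeta_q(2)$ main term arises in Lemma \ref{S}, with one factor of $\zeta_q(2)=1/(1-q^{-1})$ for each of $f_1,f_2$ being square-free.

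The analytic heart, and the step I expect to be the main obstacle, is establishing the error term $O\!\left(q^{l-\min(n_1,n_2)/2}\right)$. For the single-variable Lemma \ref{S} this comes from a standard contour shift / Weil-bound argument on a one-variable zeta function whose relevant $L$-factors are controlled by the Riemann Hypothesis for curves; here one has a genuinely two-variable Dirichlet series, and the coprimality coupling $(f_1,f_2)=1$ prevents it from splitting cleanly as a product of two independent one-variable problems. My approach would be to disentangle the coupling by a Möbius inversion over the common divisor $d=(f_1,f_2)$ — writing $f_1=d g_1$, $f_2=d g_2$ with $(g_1,g_2)=1$ and then dropping $d$ to $1$ using square-freeness — reducing to one-variable sums to which the character-sum estimate behind Lemma \ref{S} (ultimately the Weil bound, cf.\ \cite{KR09}, \cite{Betal09}) applies in each variable separately. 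The $\min(n_1,n_2)$ in the exponent reflects that the saving is governed by the shorter of the two expansions, so I would expect to bound the off-diagonal/error contributions by the worse of the two one-variable error terms, and the bookkeeping needed to show the cross terms do not degrade this to $\min(n_1,n_2)/2$ beyond what is claimed is where the real work lies.
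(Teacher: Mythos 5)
Your plan is correct and is essentially the paper's proof: the decisive step you identify — Möbius inversion over the common divisor $D=(f_1,f_2)$, which reduces the count to a product of two independent applications of Lemma \ref{S} with $U$ replaced by $UD$, followed by evaluating the resulting absolutely convergent sum $\sum_D \mu(D)|D|^{-2}\prod_{P\mid D}(1+|P|^{-1})^{-2}$ as the Euler product giving $L$ — is exactly what the paper does, and your local-factor bookkeeping (the $(q+2)$ at the points $x_i$, the $(1+2|P|^{-1})^{-1}$ at $P\mid U$) matches. The two-variable Dirichlet series framing in your first paragraph is only packaging; no pole/Tauberian analysis is needed here since the sum over $D$ converges absolutely, and the error term falls out of truncating that sum at $\deg D\le\min(n_1,n_2)$ together with the error from Lemma \ref{S}.
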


\begin{proof} By inclusion-exclusion principle (same notations as in \cite[Theorem 13.5]{CombBook}), with
$$
f(D)=|\{(f_1,f_2)\in\mathcal{F}_{n_1}\times\mathcal{F}_{n_2}:\,(f_i,U)=1,\,D|(f_1,f_2),\,f_1(x_i)=a_i,\,f_2(x_i)=b_i, \,1\leq i\leq l \}|,
$$
$$
g(D)=|\{(f_1,f_2)\in\mathcal{F}_{n_1}\times\mathcal{F}_{n_2}:\,(f_i,U)=1,\,(f_1,f_2)=D,\,f_1(x_i)=a_i,\,f_2(x_i)=b_i, \,1\leq i\leq l \}|,
$$
where $D$ is a polynomial in $\mathbb{F}_q[x]$, we have
$$
R^{U}_{n_1,n_2}(l)=g(1)=\sum_{\scriptscriptstyle D,\,D(x_i)\neq 0,(D,U)=1}\mu (D) f(D).
$$

But notice that when $(D,U)=1$
$$
f(D)=|\{(f_1,f_2)\in\mathcal{F}_{n_1-\text{deg}(D)}\times\mathcal{F}_{n_2-\text{deg}(D)}:\,(f_i,UD)=1,\,f_1(x_i)=a_i,\,f_2(x_i)=b_i, \,1\leq i\leq l\}|,
$$
hence Lemma \ref{S} implies
\begin{multline*}
f(D)=S^{UD}_{n_1-\text{deg}(D)}(l)\cdot S^{UD}_{n_2-\text{deg}(D)}(l)=\\
=\frac{q^{n_1+n_2-2\text{deg}(D)}}{\zeta_{q}^{2}(2)}\left(\frac{q}{q^{2}-1}\right)^{2l}\prod_{P|UD}(1+|P|^{-1})^{-2}\left(1+O(q^{l+\frac{\text{deg}(D)}{2}-\frac{\text{min}(n_1,n_2)}{2}})\right).
\end{multline*}

So, one has
\begin{multline*}
R^{U}_{n_1,n_2}(l)=\sum_{\scriptscriptstyle D,\,D(x_i)\neq 0,(D,U)=1}\mu (D) f(D)=\\
\frac{q^{n_1+n_2}}{\zeta_{q}^{2}(2)}\left(\frac{q}{q^{2}-1}\right)^{2l}\prod_{P|U}(1+|P|^{-1})^{-2}\sum_{\begin{array}{c}\scriptscriptstyle D(x_i)\neq 0,\,(D,U)=1\\\scriptscriptstyle \text{deg}(D)\leq \text{min}(n_1,n_2)\end{array}}\mu(D)|D|^{-2}\prod_{P|D}(1+|P|^{-1})^{-2}\left(1+O(q^{l-\frac{\text{min}(n_1,n_2)}{2}})\right).
\end{multline*}

Now, we observe that
\begin{multline*}
\sum_{\begin{array}{c}\scriptscriptstyle D(x_i)\neq 0,\, (D,U)=1\\ \scriptscriptstyle\text{deg}(D)\leq \text{min}(n_1,n_2)\end{array}}\mu(D)|D|^{-2}\prod_{P|D}(1+|P|^{-1})^{-2}=\\
\sum_{\scriptscriptstyle D,\,D(x_i)\neq 0,\,(D,U)=1}\mu(D)|D|^{-2}\prod_{P|D}(1+|P|^{-1})^{-2}+O(q^{-2\text{min}(n_1,n_2)}),
\end{multline*}
where we have that
\begin{multline*}
\sum_{\scriptscriptstyle D,\,D(x_i)\neq 0,\,(D,U)=1}\mu(D)|D|^{-2}\prod_{P|D}(1+|P|^{-1})^{-2}=\\
=\left(\frac{(q+1)^{2}}{q(q+2)}\right)^{l}\prod_{P|U}\left(\frac{1+2|P|^{-1}}{(1+|P|^{-1})^2}\right)^{-1}\prod_{P\,\text{prime}}\left(1-\frac{|P|^{-2}}{(1+|P|^{-1})^2}\right)
=\left(\frac{(q+1)^{2}}{q(q+2)}\right)^{l}\prod_{P|U}\left(\frac{1+2|P|^{-1}}{(1+|P|^{-1})^2}\right)^{-1}L.
\end{multline*}
We can prove that $0<L<1$ (see next Remark \ref{non_vanishing_L}).
So, finally
$$
R^{U}_{n_1,n_2}(l)=\frac{q^{n_1+n_2}L}{\zeta_{q}^{2}(2)}\left(\frac{q}{(q-1)^{2}(q+2)}\right)^{l}\prod_{P|U}\left(\frac{1}{1+2|P|^{-1}}\right)\left(1+O(q^{l-\frac{\text{min}(n_1,n_2)}{2}})\right).
$$
\end{proof}

\begin{remark}\label{non_vanishing_L}
We need to prove that the infinite product
$\prod_{P\,\text{prime}}(1-\frac{|P|^{-2}}{(1+|P|^{-1})^2})$
converges to a real number $L$ such that $0<L<1$. The Prime Polynomial Theorem implies that this is equivalent to prove that the infinite product
$$\prod_{\nu\geq 1} \left(1-\frac{1} {(q^\nu + 1)^2} \right)^{\frac{q^{\nu}}{\nu}} $$
converges to a positive real number $\tilde{L}$, in particular, we will see that $0<\tilde{L}<1$ (remember that $q\geq 3$).

Because $\left(1-\frac{1} {(q^{\nu}+1)^2} \right)^{\frac{q^{\nu}}{\nu}}<1$, we have that $\tilde{L}<1$. In order to prove that $0<\tilde{L}$, and since for $z\in(0,1)$ we have $\log(1-z)\geq\frac{z}{z-1}$, it is enough to prove that
$$ \sum_{\nu\geq 1}\frac{q^{\nu}}{\nu}\frac{\frac{1}{(q^{\nu}+1)^2}}{\frac{1}{(q^{\nu}+1)^2}-1}=-\sum_{\nu\geq 1}\frac{1}{\nu}\cdot\frac{1}{q^\nu+2} $$
is convergent. Indeed, we have
$$0\leq \sum_{\nu\geq 1}\frac{1}{\nu}\cdot\frac{1}{q^{\nu}+2}\leq\sum_{\nu\geq 1}\frac{1}{\nu 3^\nu}=\log\frac{3}{2}.$$
Thus,
$$\prod_{\nu\geq 1}\left(1-\frac{1}{(q^{\nu}+1)^2}\right)^{\frac{q^{\nu}}{\nu}}\geq\frac{2}{3}.$$
\end{remark}

\begin{proposition}\label{final} For $0\leq l\leq q$, let $x_1,...,x_l$ be distinct elements of $\mathbb{F}_q$, and  $a_1,...,a_l,b_1,...,b_l$ be elements of $\mathbb{F}_{q}^{*}$. Then, for any $1>\epsilon > 0$, we have
$$
|\{(f,f_1,f_2)\in\mathcal{F}_{(n,n_1,n_2)}:\,f(x_i)f_1(x_i)=a_i,\,f(x_i)f_2(x_i)=b_i,\,1\leq i\leq l\}|= 
$$
$$
=\frac{KLq^{n_1+n_2+n}}{\zeta_{q}^{3}(2)}\left( \frac{q}{(q-1)^2(q+3)}\right)^{l}(1+O(q^{-(1-\epsilon)n+\epsilon l}+q^{-n-\frac{\text{min}(n_1,n_2)}{2}+l})),
$$
where $K:=\prod_{P}\left(\frac{1+3|P|^{-1}}{(1+|P|^{-1})(1+2|P|^{-1})}\right)$.
\end{proposition}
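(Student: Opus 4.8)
The plan is to combine the two-variable count from Lemma \ref{guay} with the one-variable square-free count from Lemma \ref{S} via an inclusion-exclusion over the common factor $f=(h_1,h_2)$, exactly as Lemma \ref{guay} was itself deduced from Lemma \ref{S}. The quantity we want counts triples $(f,f_1,f_2)\in\mathcal{F}_{(n,n_1,n_2)}$ with prescribed values $f(x_i)f_1(x_i)=a_i$ and $f(x_i)f_2(x_i)=b_i$. The pairwise coprimality $(f,f_1)=(f,f_2)=(f_1,f_2)=1$ is the new constraint, and I would handle it by summing over the possible value of $f$ itself: for each admissible monic square-free $f$ with $f(x_i)=c_i\in\mathbb{F}_q^*$, the conditions on $f_1,f_2$ become $f_1(x_i)=a_i/c_i$ and $f_2(x_i)=b_i/c_i$, together with $(f_1,f)=(f_2,f)=(f_1,f_2)=1$. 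Thus I would first fix $f$ and apply Lemma \ref{guay} with the single polynomial $U$ replaced by $U\cdot f$ (so that $(f_i,Uf)=1$ encodes both $(f_i,U)=1$ and $(f_i,f)=1$), obtaining $R^{Uf}_{n_1,n_2}(l)$; then I would sum the result over all valid $f$.

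Concretely, I would write
\begin{multline*}
|\{(f,f_1,f_2):\dots\}|=\sum_{\substack{f\in\mathcal{F}_n,\,(f,U)=1\\ f(x_i)=c_i\in\mathbb{F}_q^*}}R^{Uf}_{n_1,n_2}(l)\\
=\frac{q^{n_1+n_2}L}{\zeta_q^2(2)}\left(\frac{q}{(q-1)^2(q+2)}\right)^{l}\sum_{\substack{f\in\mathcal{F}_n,\,(f,U)=1\\ f(x_i)\neq 0}}\prod_{P|Uf}\left(\frac{1}{1+2|P|^{-1}}\right)\left(1+O(q^{l-\frac{\min(n_1,n_2)}{2}})\right),
\end{multline*}
where the product over $P\mid Uf$ factors as $\prod_{P|U}(\cdot)\prod_{P|f}(\cdot)$ since $(f,U)=1$. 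The remaining task is to evaluate the weighted sum $\sum_{f}\prod_{P|f}(1+2|P|^{-1})^{-1}$ over square-free $f\in\mathcal{F}_n$ avoiding the $x_i$, with the prescribed value condition $f(x_i)=c_i$ contributing the factor $(q/(q^2-1))^l$ through a Lemma \ref{S}-type count refined by the multiplicative weight. I expect this weighted count to produce the Euler factor $\prod_P\bigl((1+3|P|^{-1})/((1+|P|^{-1})(1+2|P|^{-1}))\bigr)=K$ after combining the $f$-local factor $(1+2|P|^{-1})^{-1}$ with the $(1+|P|^{-1})^{-1}$ coming from the square-free density of $f$, and the arithmetic-progression factor $(q+1)/(q+2)$ at the places $x_i$ that merges $(q/(q^2-1))$ with $(q/((q-1)^2(q+2)))$ into $(q/((q-1)^2(q+3)))$.

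The main obstacle, and the step requiring the most care, is the bookkeeping of error terms. There are two sources: the $O(q^{l-\min(n_1,n_2)/2})$ relative error already present in Lemma \ref{guay}, which survives the sum over $f$ and yields the term $q^{-n-\min(n_1,n_2)/2+l}$ after normalization; and the error incurred in truncating and evaluating the weighted sum over $f$ of degree $n$, which by the same square-free estimation technique (summing a multiplicative function against the zeta function of $\mathbb{F}_q[t]$, then extending the range of $f$ to all degrees up to $n$ at the cost of the tail) produces the $q^{-(1-\epsilon)n+\epsilon l}$ term. I would prove the infinite product defining $K$ converges to a nonzero constant by the same Euler-product argument as in Remark \ref{non_vanishing_L}, comparing $\log$ of the general factor against a convergent series via the Prime Polynomial Theorem. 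Assembling the leading constant $KL/\zeta_q^3(2)$ then amounts to checking that the three copies of $\zeta_q(2)^{-1}$ (two from Lemma \ref{guay}, one from the square-free density of $f$) and the local factors recombine correctly; this is routine once the weighted sum over $f$ has been identified with its Euler product.
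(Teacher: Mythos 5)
Your strategy is the same as the paper's: fix $f$, apply Lemma \ref{guay} with $U=f$ to count the pairs $(f_1,f_2)$, and then evaluate the resulting weighted sum $\sum_{f\in\mathcal{F}_n,\,f(x_i)\neq 0}\prod_{P\mid f}(1+2|P|^{-1})^{-1}$ by Euler-product methods; the paper packages this last step as a Dirichlet series $G(w)=\frac{\zeta_q(w)}{\zeta_q(2w)}H(w)\bigl(1+\frac{1}{q^{w-1}(q+2)}\bigr)^{-l}$ and applies the function-field Tauberian theorem (Theorem 17.1 of \cite{Rosen02}), which is exactly the ``summing a multiplicative function against the zeta function'' step you describe. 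Your identification of the constant $K=H(1)$ and of the two sources of error is also the paper's.

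The one concrete point to repair is the local factor at the points $x_i$. You claim the weighted $f$-sum contributes $\frac{q+1}{q+2}$ per point and that this, together with the Lemma \ref{S}-type factor $\frac{q}{q^2-1}$, merges $\bigl(\frac{q}{(q-1)^2(q+2)}\bigr)^l$ into $\bigl(\frac{q}{(q-1)^2(q+3)}\bigr)^l$. The factor actually needed is exactly $\frac{q+2}{q+3}$ per point, whereas your bookkeeping (summing over the $q-1$ choices of $c_i$) gives $(q-1)\cdot\frac{q}{q^2-1}\cdot\frac{q+1}{q+2}=\frac{q}{q+2}\neq\frac{q+2}{q+3}$; the discrepancy comes from using the \emph{unweighted} square-free density at $x_i$ instead of the density weighted by $\prod_{P\mid f}(1+2|P|^{-1})^{-1}$. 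The correct computation is local: the Euler factor of the weighted sum at the degree-one prime $t-x_i$ is $1+\frac{q^{1-w}}{q+2}$, which equals $\frac{q+3}{q+2}$ at $w=1$, so deleting the $l$ factors at the $x_i$ multiplies the main term by $\bigl(\frac{q+2}{q+3}\bigr)^l$ and converts $(q+2)$ into $(q+3)$ in the denominator. Relatedly, there is no need to stratify by the values $f(x_i)=c_i$ at all: Lemma \ref{guay} returns a count independent of the prescribed nonzero values, so one can sum directly over all $f$ with $f(x_i)\neq 0$, as the paper does.
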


\begin{proof} First we observe that
$$
|\{(f,f_1,f_2)\in\mathcal{F}_{(n,n_1,n_2)}:\,f(x_i)f_1(x_i)=a_i,\,f(x_i)f_2(x_i)=b_i,\,1\leq i\leq l\}|=
$$
$$=\sum_{ \begin{array}{c}\scriptscriptstyle f\in\mathcal{F}_n\\ \scriptscriptstyle f(x_i)\neq 0\end{array}}\sum_{\begin{array}{c}\scriptscriptstyle f_1\in\mathcal{F}_{n_1}
\\ \scriptscriptstyle f_1(x_i)=a_if(x_i)^{-1}\\ \scriptscriptstyle (f,f_1)=1\end{array}}\sum_{\begin{array}{c} \scriptscriptstyle f_2\in\mathcal{F}_{n_1}\\ \scriptscriptstyle f_2(x_i)=b_if(x_i)^{-1}\\\scriptscriptstyle (ff_1,f_2)=1\end{array}} 1=
$$
$$
=\sum_{\scriptscriptstyle f\in \mathcal{F}_n,\,f(x_i)\neq 0} R^{f}_{n_1,n_2}(l).
$$
Using Lemma \ref{guay} we have that
$$
|\{(f,f_1,f_2)\in\mathcal{F}_{n,n_1,n_2}:\,f(x_i)f_1(x_i)=a_i,\,f(x_i)f_2(x_i)=b_i,\,1\leq i\leq l\}|= 
$$
$$
=\frac{q^{n_1+n_2}L}{\zeta_{q}^{2}(2)}\left(\frac{q}{(q-1)^{2}(q+2)}\right)^{l}\sum_{U\in \mathcal{F}_n,U(x_i)\neq 0}\prod_{P|U}\frac{1}{1+2|P|^{-1}}+O(q^{n_1+n_2-\frac{\text{min}(n_1,n_2)}{2}-l})=
$$
$$
=\frac{q^{n_1+n_2}L}{\zeta_{q}^{2}(2)}\left(\frac{q}{(q-1)^{2}(q+2)}\right)^{l}\sum_{\text{deg}(U)=n}c(U)+O(q^{n_1+n_2-\frac{\text{min}(n_1,n_2)}{2}-l}),
$$
where for any polynomial $U$, we define
$$
c(U)=\begin{cases}\mu^2(U)\prod_{P|U}\frac{1}{1+2|P|^{-1}}& U(x_i)\neq 0\\ 0&\text{otherwise}\end{cases}.
$$

In order to evaluate $\sum_{\text{deg}(U)=n}c(U)$, we consider the Dirichlet series
$$
G(w)=\sum_{U}\frac{c(U)}{|U|^w}=\prod_{P,\,P(x_i)\neq 0}(1+\frac{1}{|P|^{w}}\cdot \frac{|P|}{(|P|+2)})=
$$
$$
=\frac{\zeta_q(w)}{\zeta_q(2w)}H(w)(1+\frac{1}{q^{w-1}(q+2)})^{-l},
$$
with
$$
H(w)= \prod_{P}(1-\frac{2}{(1+|P|^w)(|P|+2)}).
$$
Notice that $H(w)$ converges absolutely for $\text{Re}(w)>0$, and $G(w)$ is meromorphic for $\text{Re}(w)>0$ with simple poles at the points $w$ where $\zeta_q(w)=(1-q^{1-w})^{-1}$ has poles, that is, $1+i\frac{2\pi n}{\text{log} q}$. Thus, $G(w)$ has a simple pole at $w=1$ with residue
$$
\frac{K}{\zeta_q(2)\text{log}(q)}\left( \frac{q+2}{q+3}\right)^{l},
$$
where $K=H(1)$.

Using Theorem $17.1$ of \cite{Rosen02}, which is the function field version of the Wiener-Ikehara Tauberian Theorem, we get that
$$
\sum_{\text{deg}(U)=n}c(U)= \frac{K}{\zeta_q(2)}\left( \frac{q+2}{q+3}\right)^{l}q^n+O_q(q^{\epsilon n}),
$$
for all $\epsilon\geq 0$ and where, looking at the proof of the theorem and proceding as in Proposition $4.3$ in \cite{Betal09}, we can exchange $O_q(q^{\epsilon n})$ by $O(q^{\epsilon (n+l)})$.
\end{proof}

\begin{cor}\label{corprefinal} For $0\leq l\leq q$,  let $x_1,...,x_l$ be distinct elements of $\mathbb{F}_q$, and let $a_1,...,a_l,b_1,...,b_l$ be elements of $\mathbb{F}_q$ such that $a_1=...=a_{r_{0}}=b_1=...=b_{r_{0}}=0$, $a_{r_{0}+1}=...=a_{r_{0}+r_1}=0=b_{r_{0}+r_1+1}=...=b_{r_{0}+r_1+r_2}$ and $b_{r_{0}+1},...,b_{r_{0}+r_1},a_{r_{0}+r_1+1},...,a_{r_{0}+r_1+r_2},a_j,b_j\neq 0$ if $j>r_{0}+r_1+r_2=m$. Then, for every $\epsilon >0$, the number
$$
\frac{|\{(f,f_1,f_2)\in\mathcal{F}_{(n,n_1,n_2)}:\,f(x_i)f_1(x_i)=a_i,\,f(x_i)f_2(x_i)=b_i,\,f_1(x_i)f_2(x_i)=c_i,\,1\leq i\leq l\}|}{\mid\mathcal{F}_{(n,n_1n_2)}\mid},
$$
where $f(x_i)^2c_i=a_ib_i$, is equal to
$$
\left(\frac{1}{(q-1)(q+3)}\right)^{m}\left(\frac{q}{(q-1)^2(q+3)}\right)^{l-m}\left(1+ O(q^{-\frac{(1-\epsilon)}{2}\text{min}(n,n_1,n_2)+l})\right).
$$
\end{cor}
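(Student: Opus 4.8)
The plan is to reduce the statement to Proposition \ref{final} by splitting the $l$ points into the four mutually exclusive types dictated by the coprimality conditions $(f,f_1)=(f,f_2)=(f_1,f_2)=1$ recorded in Remark \ref{coprime}. First I would observe that at any $x_i$ at most one of $f,f_1,f_2$ can vanish, so the prescribed data $(a_i,b_i,c_i)=(ff_1,ff_2,f_1f_2)(x_i)$ falls into exactly the four cases of the statement: for $i\le r_0$ the polynomial $f$ vanishes, so $a_i=b_i=0$ and the genuine datum is the nonzero value $c_i$; for $r_0<i\le r_0+r_1$ the polynomial $f_1$ vanishes, so the genuine datum is $b_i\neq 0$; for $r_0+r_1<i\le m$ the polynomial $f_2$ vanishes, with genuine datum $a_i\neq 0$; and for $i>m$ all three are nonzero, with genuine data $a_i,b_i$ and $c_i=a_ib_i/f(x_i)^2$ merely recorded. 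Thus at a generic point the condition on $c_i$ is already implied by those on $a_i,b_i$, while at a degenerate point exactly one nonzero product value is prescribed together with the forced vanishing of one polynomial.

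At the $m$ degenerate points I would factor out the corresponding linear factors, writing $f=\ell_0 f'$, $f_1=\ell_1 f_1'$, $f_2=\ell_2 f_2'$, where $\ell_0=\prod_{i\le r_0}(t-x_i)$ and $\ell_1,\ell_2$ are defined analogously; these are pairwise coprime because the $x_i$ are distinct. Square-freeness and pairwise coprimality of $(f,f_1,f_2)$ translate into the same properties for $(f',f_1',f_2')\in\mathcal{F}_{(n-r_0,n_1-r_1,n_2-r_2)}$, together with the nonvanishing of each reduced polynomial at the points where its linear factor was removed and coprimality with the $\ell$'s. Under this substitution the conditions at the former generic points become exactly the conditions handled by Proposition \ref{final} (prescribing the two products $f'f_1'$ and $f'f_2'$ at each point), while the conditions at the degenerate points become the prescription of a single nonzero product of two of the reduced polynomials together with nonvanishing of the third. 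The reduced count is then carried out by the same inclusion–exclusion over the common factor followed by the Dirichlet series / Wiener–Ikehara analysis as in Lemma \ref{guay} and Proposition \ref{final}, the factored-out $\ell$'s now contributing finitely many extra Euler factors to the modulus $U$.

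The outcome is a product of local factors, one per point. For the $l-m$ generic points Proposition \ref{final} supplies the factor $\frac{q}{(q-1)^2(q+3)}$ verbatim. For each degenerate point the local factor is obtained by summing the pair-count of Lemma \ref{guay} (or of Proposition \ref{final}) over the fiber of the prescribed product, i.e.\ over the $q-1$ ways of splitting the prescribed nonzero value, and combining this with the loss of one degree caused by removing the linear factor; after the global constants $K$, $L$ and the powers of $\zeta_q(2)$ have cancelled against $|\mathcal{F}_{(n,n_1,n_2)}|$ (computed as the $l=0$ case of Proposition \ref{final}), this collapses to $\frac{1}{(q-1)(q+3)}$, the same value for all three flavours of degenerate point by the symmetry of the parametrization. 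Assembling the $l-m$ generic and $m$ degenerate factors yields the claimed main term $\big(\tfrac{1}{(q-1)(q+3)}\big)^m\big(\tfrac{q}{(q-1)^2(q+3)}\big)^{l-m}$.

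The main obstacle is precisely the bookkeeping in this last step: one must verify that forcing a simple root, rather than prescribing a nonzero value, alters the local density by the exact ratio $\frac{q-1}{q}$ relative to a generic point, and that the extra Euler factors coming from the $\ell$'s together with the degree shift $q^{-m}$ cancel cleanly against $|\mathcal{F}_{(n,n_1,n_2)}|$ so that no spurious constant survives. Controlling the error uniformly is the other delicate point: since a vanishing condition may concern any of the three polynomials, the asymmetric error exponents of Proposition \ref{final} must be symmetrized over the three degrees, which is what produces the stated bound $O\big(q^{-\frac{(1-\epsilon)}{2}\min(n,n_1,n_2)+l}\big)$.
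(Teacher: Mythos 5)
Your argument is exactly the paper's proof (which is stated in two lines): factor the linear factors $(t-x_i)$ out of whichever of $f,f_1,f_2$ is forced to vanish at each of the $m$ degenerate points, apply Proposition \ref{final} to the reduced triple, and sum over the $q-1$ ways of splitting the one prescribed nonzero product at each such point; your bookkeeping, including the local factor $\frac{1}{(q-1)(q+3)}$ and the ratio $\frac{q-1}{q}$ against a generic point, checks out. The only difference is that you supply the details the paper omits.
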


\begin{proof} Let us write $f=(x-x_1)...(x-x_{r_{0}})f'$, $f_1=(x-x_{r_{0}+1})...(x-x_{r_{0}+r_1})f'_1$, and $f_2=(x-x_{r_{0}+r_1+1})...(x-x_{r_{0}+r_1+r_2})f'_2$. Now, apply Proposition \ref{final} to the $3$-tupla $(f',f'_1,f'_2)$ and sum.

\end{proof}

\begin{cor}\label{corfinal} With notations as in Corollary \ref{corprefinal}, the number
$$
\frac{|\{(f,f_1,f_2)\in\mathcal{F}_{(n,n_1,n_2)}:\,\chi(f(x_i)f_1(x_i))=e^{1}_i,\,\chi(f(x_i)f_2(x_i))=e^{2}_i,\,\chi(f_1(x_i)f_2(x_i))=e_i,\,1\leq i\leq l\}|}{|\mathcal{F}_{(n,n_1,n_2)}|},
$$
where $e^{1}_i,e^{2}_i,e_i\in\{-1,\,0,\,1\}$, $\chi(f(x_i)^2)e_i=e_{i}^{1}e_{i}^{2}$, and exactly $2m$ of them are equal to zero, is equal to
$$
C_{m}^{l}=\left( \frac{q-1}{2}\right)^m 
\left( \frac{q-1}{2}\right)^{2(l-m)}
\left(\frac{1}{(q-1)(q+3)}\right)^{m}
\left(\frac{q}{(q-1)^2(q+3)}\right)^{l-m}\left(1+ O(q^{-\frac{(1-\epsilon)}{2}\text{min}(n,n_1,n_2)+l})\right)=
$$
$$
=\left(\frac{1}{2(q+3)}\right)^{m} 
\left( \frac{q}{4(q+3)}\right)^{l-m}\left(1+ O(q^{-\frac{(1-\epsilon)}{2}\text{min}(n,n_1,n_2)+l})\right).
$$
\end{cor}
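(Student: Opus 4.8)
The plan is to deduce Corollary~\ref{corfinal} from Corollary~\ref{corprefinal} by refining the partition of $\mathcal{F}_{(n,n_1,n_2)}$: instead of fixing the \emph{characters} of the products, I first fix their actual \emph{values}. Concretely, every triple $(f,f_1,f_2)$ determines the products $a_i:=f(x_i)f_1(x_i)$, $b_i:=f(x_i)f_2(x_i)$ and $c_i:=f_1(x_i)f_2(x_i)$, so the set counted in Corollary~\ref{corfinal} is the \emph{disjoint} union, over all value-assignments $(a_i,b_i,c_i)_i$ compatible with the prescribed characters $(e_i^1,e_i^2,e_i)_i$, of the sets counted in Corollary~\ref{corprefinal}. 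Since the leading term of Corollary~\ref{corprefinal} depends only on $l$ and on the number $m$ of vanishing points, and not on the particular nonzero values chosen, the whole computation collapses to counting how many compatible value-assignments there are and multiplying.

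First I would carry out the compatibility bookkeeping point by point. By square-freeness and pairwise coprimality, at each $x_i$ at most one of $f,f_1,f_2$ vanishes. At a \emph{non-vanishing} point all three characters lie in $\{\pm1\}$, and $f(x_i)^2$ is a nonzero square, so $\chi(c_i)=\chi(a_i)\chi(b_i)=e_i^1e_i^2$; thus the relation $\chi(f(x_i)^2)e_i=e_i^1e_i^2$ forces $e_i=e_i^1e_i^2$, and prescribing $\chi(a_i)=e_i^1,\ \chi(b_i)=e_i^2$ automatically yields $\chi(c_i)=e_i$. There the data fixed by Corollary~\ref{corprefinal} are precisely $a_i,b_i$, giving $\tfrac{q-1}{2}$ admissible values each, hence $\bigl(\tfrac{q-1}{2}\bigr)^2$ compatible pairs. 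At a \emph{vanishing} point exactly two of the three products are zero and the third is a nonzero value whose character is the unique nonzero entry among $e_i^1,e_i^2,e_i$; this leaves $\tfrac{q-1}{2}$ admissible values. Summing over the $l-m$ non-vanishing and $m$ vanishing points gives the overall multiplier $\bigl(\tfrac{q-1}{2}\bigr)^{2(l-m)}\bigl(\tfrac{q-1}{2}\bigr)^{m}$.

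Using disjointness together with the value-independence of Corollary~\ref{corprefinal}, I then obtain
$$
C_m^l=\left(\frac{q-1}{2}\right)^{2(l-m)}\left(\frac{q-1}{2}\right)^{m}\left(\frac{q}{(q-1)^2(q+3)}\right)^{l-m}\left(\frac{1}{(q-1)(q+3)}\right)^{m}\left(1+O(q^{-\frac{(1-\epsilon)}{2}\min(n,n_1,n_2)+l})\right).
$$
A direct simplification of the per-point factors, namely $\frac{(q-1)^2}{4}\cdot\frac{q}{(q-1)^2(q+3)}=\frac{q}{4(q+3)}$ and $\frac{q-1}{2}\cdot\frac{1}{(q-1)(q+3)}=\frac{1}{2(q+3)}$, yields the announced closed form $\bigl(\tfrac{1}{2(q+3)}\bigr)^m\bigl(\tfrac{q}{4(q+3)}\bigr)^{l-m}$. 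The relative error of Corollary~\ref{corprefinal} is common to every summand, so it passes through the (purely multiplicative) summation unchanged.

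The step I expect to be most delicate is the counting reduction itself: one must check that summing Corollary~\ref{corprefinal} over compatible values neither over- nor under-counts. The crucial subtlety lies at the non-vanishing points, where all three of $a_i,b_i,c_i$ are nonzero: it is essential to recognize that fixing the characters of $a_i$ and $b_i$ already pins down the character of $c_i$ (this is exactly the content of $e_i=e_i^1e_i^2$ together with $f(x_i)^2$ being a square), so that $c_i$ must \emph{not} be summed as an independent third factor. Getting this parity accounting right is what produces the exponent $2(l-m)$ rather than $3(l-m)$, and hence the correct constant $\frac{q}{4(q+3)}$; an analogous care is needed to verify that at each vanishing point only the single nonzero product is genuinely prescribed by Corollary~\ref{corprefinal}.
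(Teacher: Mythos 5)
Your argument is correct and is precisely the intended deduction: the paper states Corollary \ref{corfinal} without proof as an immediate consequence of Corollary \ref{corprefinal}, and your counting of compatible value-assignments ($\bigl(\tfrac{q-1}{2}\bigr)^2$ per non-vanishing point, $\tfrac{q-1}{2}$ per vanishing point) together with the simplification to $\bigl(\tfrac{1}{2(q+3)}\bigr)^m\bigl(\tfrac{q}{4(q+3)}\bigr)^{l-m}$ supplies exactly the omitted details. Your emphasis on not summing $c_i$ as an independent third factor at non-vanishing points is the right subtlety to flag.
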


\begin{cor}\label{superfinal} For $0\leq l\leq q$,  let $x_1,...,x_l$ be distinct elements of $\mathbb{P}^1(\mathbb{F}_q)$, and let $e^{1}_i,e^{2}_i,e_i\in\{-1,\,0,\,1\}$ be such that $\chi(f(x_i)^2)e_i=e_{i}^{1}e_{i}^{2}$, where exactly $2m$ of them are equal to zero. Then
$$
\frac{|\{(f,f_1,f_2)\in\widehat{\mathcal{F}}_{[n,n_1,n_2]}:\,\chi(f(x_i)f_1(x_i))=e^{1}_i,\,\chi(f(x_i)f_2(x_i))=e^{2}_i,\,\chi(f_1(x_i)f_2(x_i))=e_i\}|}{|\widehat{\mathcal{F}}_{[n,n_1,n_2]}|}
$$
is also equal to the number $C^{l}_{m}$ defined in Corollary \ref{corfinal}.
\end{cor}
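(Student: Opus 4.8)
The plan is to deduce the statement from Corollary \ref{corfinal} in two moves: first passing from the monic coprime triples counted by $\mathcal{F}_{(n,n_1,n_2)}$ to the twisted family $\widehat{\mathcal{F}}_{(n,n_1,n_2)}$, and then assembling the four configurations making up the bracket set $\widehat{\mathcal{F}}_{[n,n_1,n_2]}$ while simultaneously absorbing the point at infinity.

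For the first move I would use the quadratic-twist description of Remark \ref{symmetry}: every element of $\widehat{\mathcal{F}}_{(n,n_1,n_2)}$ is uniquely $(f,\alpha_1 f_1,\alpha_2 f_2)$ with $(f,f_1,f_2)\in\mathcal{F}_{(n,n_1,n_2)}$ and $\alpha_1,\alpha_2\in\mathbb{F}_q^{*}$. At a finite point $x_i$ the three prescribed characters transform by the global signs $s_j:=\chi(\alpha_j)\in\{\pm1\}$, namely $\chi(f\alpha_1 f_1(x_i))=s_1\chi(ff_1(x_i))$, $\chi(f\alpha_2 f_2(x_i))=s_2\chi(ff_2(x_i))$ and $\chi(\alpha_1 f_1\,\alpha_2 f_2(x_i))=s_1 s_2\chi(f_1 f_2(x_i))$. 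Multiplying by $\pm1$ neither creates nor destroys zeros and preserves the relation $\chi(f(x_i)^2)e_i=e_i^1e_i^2$, so prescribing a pattern for the twisted triple with $2m$ zeros amounts, for each $(s_1,s_2)$, to prescribing a sign-adjusted pattern for $(f,f_1,f_2)$ with the same $2m$ zeros. Since the count in Corollary \ref{corfinal} depends only on $m$ and $l$ and not on the particular signs, summing over the $\tfrac{q-1}{2}$ admissible $\alpha_j$ in each sign class and using $|\widehat{\mathcal{F}}_{(n,n_1,n_2)}|=(q-1)^2|\mathcal{F}_{(n,n_1,n_2)}|$ shows that the proportion over $\widehat{\mathcal{F}}_{(n,n_1,n_2)}$ is again $C_m^l$; the same holds verbatim for each of the three decremented configurations.

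The second move is the delicate one, and this is where the precise shape of the bracket set enters. Writing $\widehat{\mathcal{F}}_{[n,n_1,n_2]}$ as the disjoint union of its four configurations, I would first record that, in the genus $(g_1,g_2,g_3)$ normalization, the base triple $(n,n_1,n_2)$ has all three quotient degrees $n+n_1,n+n_2,n_1+n_2$ even, while each of $(n-1,n_1,n_2),(n,n_1-1,n_2),(n,n_1,n_2-1)$ makes exactly two of them odd; by Proposition \ref{final} with $l=0$ the base has size $\sim F_0:=|\mathcal{F}_{(n,n_1,n_2)}|$ and the other three have size $\sim F_0/q$, so that $|\widehat{\mathcal{F}}_{[n,n_1,n_2]}|\sim (q-1)^2F_0(q+3)/q$. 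If all $x_i$ are finite, each configuration contributes the proportion $C_m^l$ by the first move, hence so does the union. If one point is $\infty$, call it \emph{non-special} when its three characters are all nonzero and \emph{special} when two of them vanish. Recalling the definition of $\chi(F(\infty))$, the triple at infinity equals $(s_1,s_2,s_1s_2)$ when all three quotient degrees are even and has exactly two zeros otherwise; thus a non-special $\infty$ forces the base configuration and pins down both $\chi(\alpha_1)=e_l^1$ and $\chi(\alpha_2)=e_l^2$, whereas a special $\infty$ forces one of the three decremented configurations and pins down only the product $\chi(\alpha_1\alpha_2)=e_l$.

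Finally I would carry out the bookkeeping. In the non-special case the numerator is $\bigl(\tfrac{q-1}{2}\bigr)^2 C_m^{l-1}F_0$ (two sign classes fixed, base configuration, $l-1$ finite points with $m$ special), and dividing by $(q-1)^2F_0(q+3)/q$ yields $C_m^{l-1}\cdot\tfrac{q}{4(q+3)}=C_m^l$. In the special case the numerator is $\tfrac{(q-1)^2}{2}C_{m-1}^{l-1}(F_0/q)$ (only the product of signs fixed, a configuration of size $\sim F_0/q$, $m-1$ special finite points), and dividing by the same denominator yields $C_{m-1}^{l-1}\cdot\tfrac{1}{2(q+3)}=C_m^l$. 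The main obstacle is exactly this last matching: one must verify that the $1/q$ deficit in the size of the decremented configurations cancels against the fact that a special $\infty$ constrains only the product $\chi(\alpha_1\alpha_2)$ rather than both signs, and that the dominant base configuration is precisely the one realizing a non-special $\infty$. The genus normalization placing all-even quotient degrees in the base triple is what makes these two cancellations collapse to the single constant $C_m^l$. The error terms of Corollary \ref{corfinal} propagate through the finitely many configurations and sign classes without change of order, giving the stated uniform estimate.
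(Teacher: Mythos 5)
Your argument is correct and follows exactly the route of the paper's own (three-line) proof: distinguish whether some $x_j$ is the point at infinity, extend Corollary \ref{corfinal} to the twisted sets $\widehat{\mathcal{F}}_{(n,n_1,n_2)}$ via the symmetry of Remark \ref{symmetry}, and sum over the four components of $\widehat{\mathcal{F}}_{[n,n_1,n_2]}$. You have simply carried out in full the bookkeeping the paper leaves implicit; in particular the two cancellations you verify at infinity (the factor $\tfrac{q}{4(q+3)}$ from a non-special $\infty$ pinning both signs in the dominant component, versus $\tfrac{1}{2(q+3)}$ from a special $\infty$ pinning only the product of signs in a component of relative size $1/q$) are precisely what makes the paper's sketch work.
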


\begin{proof} 
Distinguish the case in which some $x_j$ is the point at infinity or not. Generalize Corollary \ref{corfinal} for the sets $\widehat{\mathcal{F}}_{(n,n_1,n_2)}$ looking at the symmetry observed in Remark \ref{symmetry}, and add for the different components of $\widehat{\mathcal{F}}_{[n,n_1,n_2]}$.
\end{proof}

\begin{proof} (of Theorem \ref{main}) Apply Corollary \ref{superfinal} in order to compute
$$
\frac{|\{(f,f_1,f_2)\in\widehat{\mathcal{F}}_{[n,n_1,n_2]}:\widehat{S}(f,f_1,f_2)=M|}{|\widehat{\mathcal{F}}_{[n,n_1,n_2]}|}=
$$
$$
=\sum_{\scriptscriptstyle \epsilon_1,...,\epsilon_{q+1}\in \{-1,1,3\}, \sum \epsilon_i =M} \sum_{j=0}^{N_{-1}}\binom{N_{-1}}{j}3^{N_1+N_{-1}} C^{q+1}_{N_1+j}=
$$
$$
=\sum_{\scriptscriptstyle \epsilon_1,...,\epsilon_{q+1}\in \{-1,1,3\},\,\sum \epsilon_i =M} \left(\frac{6}{4}\frac{1}{q+3}\right)^{N_{1}}\left(\frac{3}{4}\frac{q+2}{q+3}\right)^{N_{-1}}\left(\frac{1}{4}\frac{q}{q+3}\right)^{N_{3}}\left(1+ O(q^{-\frac{(1-\epsilon)}{2}\text{min}(n,n_1,n_2)+q})\right)
$$
$$
=\text{Prob}\left(\sum_{1}^{q+1}X_i=M\right)\left(1+ O(q^{-\frac{(1-\epsilon)}{2}\text{min}(n,n_1,n_2)+q})\right),
$$
where we use the notation $N_i$ for the number of elements equal to $i$ in the set $\{\epsilon_1,...,\epsilon_{q+1}\}$.

\end{proof}

\section{Averages and moments sequences}

We want to compute the moments of $\text{Tr}(\text{Frob}_C)/\sqrt{1+q}$. That is, the \textit{k}th moments
$$
M_k(q,g_1,g_2,g_3)=\frac{1}{|\mathcal{B}_{(g_1,g_2,g_3)}(\mathbb{F}_{q})|\curprime}\sum_{\scriptscriptstyle C\in \mathcal{B}_{(g_1,g_2,g_3)}(\mathbb{F}_{q})}\hspace{-1cm}\curprime\hspace{0.8cm}\left(\frac{\text{Tr}(\text{Frob}_C)}{\sqrt{1+q}}\right)^k.
$$

\begin{theorem}\label{moments} With notation in Theorem \ref{main}, we have
$$
M_k(q,g_1,g_2,g_3)=\mathbb{E}\left( \left(\frac{1}{\sqrt{1+q}}\sum_{i=1}^{1+q}X_i\right)^k\right)+O(q^{-\frac{1-\epsilon}{2}min(n,n_{1},n_{2})+k}).
$$
\end{theorem}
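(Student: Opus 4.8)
The plan is to reduce the moment computation to the local combinatorial analysis already carried out for the distribution, by expanding the $k$-th power of the trace into a sum of contributions localized at the points of $\mathbb{P}^1(\mathbb{F}_q)$. Since $\text{Tr}(\text{Frob}_C)=-\widehat{S}(f,f_1,f_2)$ for a curve $C$ parametrized by $(f,f_1,f_2)$, and $\widehat{S}(f,f_1,f_2)=\sum_{x\in\mathbb{P}^1(\mathbb{F}_q)}s(x)$ with $s(x):=\chi(ff_1(x))+\chi(ff_2(x))+\chi(f_1f_2(x))$, I would first write
$$M_k(q,g_1,g_2,g_3)=\frac{(-1)^k}{(1+q)^{k/2}}\,\frac{1}{|\widehat{\mathcal{F}}_{[n,n_1,n_2]}|}\sum_{(f,f_1,f_2)}\ \sum_{(x_1,\dots,x_k)\in\mathbb{P}^1(\mathbb{F}_q)^k}\ \prod_{j=1}^{k}s(x_j),$$
and then exchange the two sums so as to average each product $\prod_j s(x_j)$ over the family first.

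Next I would organize the outer sum according to the partition of $\{1,\dots,k\}$ recording which of the coordinates $x_1,\dots,x_k$ coincide. For a tuple whose distinct points are $y_1,\dots,y_l$ with multiplicities $m_1,\dots,m_l$ (so $\sum_i m_i=k$ and $l\le k$), the family-average of $\prod_j s(x_j)$ equals the joint local moment $\langle\prod_{i=1}^{l}s(y_i)^{m_i}\rangle$. This is exactly the quantity controlled by Corollary \ref{superfinal} (via Corollary \ref{corfinal}), which shows that the triples of local character values at $l\le k$ distinct points are, up to a multiplicative error $1+O(q^{-\frac{1-\epsilon}{2}\min(n,n_1,n_2)+l})$, distributed as $l$ independent copies of the local law. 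Summing over the admissible character configurations weighted by $s(y_i)^{m_i}$, I would conclude
$$\Big\langle\prod_{i=1}^{l}s(y_i)^{m_i}\Big\rangle=\prod_{i=1}^{l}\mathbb{E}\!\left[X^{m_i}\right]+O\!\left(q^{-\frac{1-\epsilon}{2}\min(n,n_1,n_2)+l}\right),$$
the point at infinity and the passage from $\mathcal{F}_{(n,n_1,n_2)}$ to $\widehat{\mathcal{F}}_{[n,n_1,n_2]}$ being handled exactly as in the proof of Corollary \ref{superfinal}, using the quadratic-twist symmetry of Remark \ref{symmetry}.

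Recombining the main terms is the heart of the identification: since each point of $\mathbb{P}^1(\mathbb{F}_q)$ contributes one asymptotically independent copy of $X$, summing $\prod_i\mathbb{E}[X^{m_i}]$ over all $k$-tuples reconstructs exactly $\mathbb{E}[(\sum_{j=1}^{1+q}X_j)^k]$ (the overall sign being absorbed into the labelling of the $X_j$), and dividing by $(1+q)^{k/2}$ yields the asserted main term. Here I would use the vanishing $\mathbb{E}[X]=0$, which forces every tuple containing a point of multiplicity one to drop out of the main term, so that the surviving tuples have all multiplicities $\ge 2$ and hence at most $\lfloor k/2\rfloor$ distinct points.

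The main obstacle is the error bookkeeping, and it is here that $\mathbb{E}[X]=0$ must be exploited a second time. Naively there are $O_k(q^l)$ tuples with $l$ distinct points, each carrying an error $O(q^{-\frac{1-\epsilon}{2}\min(n,n_1,n_2)+l})$, and after normalization by $(1+q)^{k/2}$ one must verify that the accumulation of these errors over all $l\le k$ stays within $O(q^{-\frac{1-\epsilon}{2}\min(n,n_1,n_2)+k})$. The delicate point is the contribution of the tuples with many distinct points — in particular the all-distinct ones — whose main term vanishes but whose error does not: controlling these requires pairing the per-tuple estimate of Corollary \ref{superfinal} with the count of $l$-distinct tuples and the boundedness of the absolute local moments $\mathbb{E}[|X|^{m}]=O(3^{m})$, so that the total error remains within the stated bound. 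Once this uniform estimate is established, the theorem follows by summing over the finitely many partitions of $\{1,\dots,k\}$.
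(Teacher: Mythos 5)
Your proposal is correct and follows essentially the same route as the paper: expand the $k$-th power of $\widehat{S}$ over tuples of distinct points with multiplicities, average each local product over the family via Corollary \ref{superfinal} to obtain $\prod_i\mathbb{E}[X^{m_i}]$ up to the stated multiplicative error, match this decomposition with the corresponding expansion of $\mathbb{E}\bigl[(\sum_j X_j)^k\bigr]$, and control the accumulated error by counting tuples (the paper uses the identity $\sum_l c(k,l)\sum_{(x,b)\in P_{k,l}}1=(q+1)^k$ for this). Your extra use of $\mathbb{E}[X]=0$ is a valid refinement but not needed for the term-by-term identification.
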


\begin{cor} If $g_1,g_2,g_3$ and $q$ tend to infinity, then the moments of $\mathrm{Tr}(\mathrm{Frob}_C)/\sqrt{1+q}$, as $C$ runs over the irreducible component $\mathcal{B}_{(g_1,g_2,g_3)}(\mathbb{F}_{q})$ of the moduli space $\mathcal{B}_{g}(\mathbb{F}_{q})$, are asymptotically Gaussian with mean $0$ and variance $1$.
\end{cor}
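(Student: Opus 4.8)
The plan is to deduce the Corollary from Theorem \ref{moments} by a two--step limiting argument, using the method of moments. The content of Theorem \ref{moments} is that, up to the error term $O(q^{-\frac{1-\epsilon}{2}\min(n,n_1,n_2)+k})$, the $k$-th moment $M_k(q,g_1,g_2,g_3)$ coincides with the $k$-th moment of the normalized i.i.d. sum $\frac{1}{\sqrt{1+q}}\sum_{i=1}^{1+q}X_i$. So the whole statement reduces to understanding these moments in the double limit, and the two ingredients are: making the error disappear, and evaluating the $q\to\infty$ limit of the principal term.

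First I would fix $q$ and let $g_1,g_2,g_3\to\infty$. Since the three genera grow, so do $n,n_1,n_2$; here one should assume, exactly as in Theorem \ref{main}, that in fact $\min(n,n_1,n_2)\to\infty$, a hypothesis very slightly stronger than $g_i\to\infty$ (one of the degrees could a priori stay bounded while the genera grow) and precisely the one under which the error term is controlled. For $k$ fixed the exponent $-\frac{1-\epsilon}{2}\min(n,n_1,n_2)+k$ tends to $-\infty$, so the error term tends to $0$ and
$$
M_k(q,g_1,g_2,g_3)\longrightarrow \mu_k(q):=\mathbb{E}\!\left[\left(\frac{1}{\sqrt{1+q}}\sum_{i=1}^{1+q}X_i\right)^{\!k}\right].
$$
Because $q\ge 3$, the same estimate shows the convergence is strong enough that the two limits may be taken jointly: the difference $M_k-\mu_k(q)$ tends to $0$ as soon as $\min(n,n_1,n_2)\to\infty$, irrespective of how $q\to\infty$.

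Next I would compute $\lim_{q\to\infty}\mu_k(q)$ by the classical moment form of the central limit theorem. The summands $X_i$ are i.i.d., uniformly bounded ($|X_i|\le 3$), with $\mathbb{E}[X_i]=0$ (a one-line check from the three probabilities) and variance $\operatorname{Var}(X_i)=\mathbb{E}[X_i^2]$ converging to a finite positive constant $\sigma^2=\lim_q\operatorname{Var}(X_1)$. Expanding $\mathbb{E}[(\sum_{i=1}^{1+q}X_i)^k]$ as a sum over set partitions of the $k$ factors (two factors in the same block precisely when they carry the same index), every partition with a singleton block contributes a factor $\mathbb{E}[X_i]=0$ and drops out, while every partition having a block of size at least $3$ involves strictly fewer than $k/2$ free indices and is therefore of lower order in $1+q$; only the pairings survive to leading order. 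Hence $\mu_k(q)\to 0$ for $k$ odd and $\mu_k(q)\to (k-1)!!\,\sigma^{k}$ for $k$ even, which are exactly the moments of the centered Gaussian of variance $\sigma^2$.

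Finally, since the Gaussian law is determined by its moments (Carleman's condition, via the Fr\'echet--Shohat theorem), convergence of $M_k$ to the Gaussian moments for every $k$ upgrades to convergence in distribution, yielding the asymptotic normality. The main obstacle is the middle step: one is handling not a single sequence of i.i.d. variables but a triangular array whose common law varies with $q$ while the number of summands $1+q$ simultaneously grows, so the bookkeeping of which index-partitions contribute at leading order, together with the verification that the normalization $\sqrt{1+q}$ yields the finite limiting variance $\sigma^2$ rather than $0$ or $\infty$, must be carried out carefully; the uniform bound $|X_i|\le 3$ is exactly what makes all the subleading partition classes negligible.
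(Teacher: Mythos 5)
Your argument follows the same route as the paper's own proof: Theorem \ref{moments} reduces everything to the moments of the normalized i.i.d.\ sum, the error term is killed by letting $\min(n,n_1,n_2)\to\infty$, the moments of the triangular array are shown to converge to Gaussian moments (the paper simply cites \cite[Sec. 30]{Bill95}, where you instead carry out the partition/pairing expansion by hand), and the method of moments (Theorem 30.2 of \cite{Bill95}) upgrades moment convergence to convergence in distribution. Your extra care about the triangular-array nature of the problem, and about needing $\min(n,n_1,n_2)\to\infty$ rather than merely $g_i\to\infty$, is exactly what the paper leaves implicit.

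The genuine gap is the one computation you defer, namely the value of $\sigma^2=\lim_q\operatorname{Var}(X_1)$. From the distribution in Theorem \ref{main} one checks $\mathbb{E}[X_1]=0$ but
\[
\mathbb{E}[X_1^2]=\frac{3(q+2)+6+9q}{4(q+3)}=\frac{3(q+1)}{q+3}\longrightarrow 3 ,
\]
so your pairing argument produces the limiting moments $(k-1)!!\,3^{k/2}$ of a centered Gaussian of variance $3$, not $1$. (This is consistent with the decomposition $\operatorname{Jac}(C)\sim\operatorname{Jac}(C_1)\times\operatorname{Jac}(C_2)\times\operatorname{Jac}(C_3)$: the trace is a sum of three hyperelliptic traces, each contributing variance $1$ after division by $\sqrt{1+q}$; likewise for general $r$ the same computation with Theorem \ref{main_gen} gives limiting variance $2^r-1$.) So as a proof of the corollary as literally stated (``variance $1$'') your argument does not close: completing your own deferred step shows the correct normalization is $\sqrt{3(1+q)}$, equivalently that the limit law of $\mathrm{Tr}(\mathrm{Frob}_C)/\sqrt{1+q}$ has variance $3$. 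You should record $\sigma^2=3$ explicitly and flag the discrepancy with the stated constant. The paper's one-line proof passes over exactly this point, so the defect lies in the statement rather than in your strategy.
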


\begin{proof} Since the moments of a sum of bounded i.i.d. random variables converge to the Gaussian moments (\cite[Sec. 30]{Bill95}), it follows that, as all $q,g_1,g_2,g_3$ go to $\infty$, $M_k(q,g_1,g_2,g_3)$ agrees with Gaussian moments for all $k$. Then, Theorem $30.2$ in \cite{Bill95} implies the corollary.
\end{proof}

\begin{proof} (of Theorem \ref{moments}) We can write the \textit{k}th moment as
$$
M_k(q,g_1,g_2,g_3)=(-1)^k\frac{q(q^2-1)}{|\widehat{\mathcal{F}}_{[2g_1+2,2g_2+2,2g_3+2]}|}\sum_{\scriptscriptstyle (f_1,f_2,f_3)\in\widehat{\mathcal{F}}_{[2g_1+2,2g_2+2,2g_3+2]}}
(\widehat{S}(f_1,f_2,f_3))^k=
$$
$$
=\frac{(-1)^k q(q^2-1)}{|\widehat{\mathcal{F}}_{[2g_1+2,2g_2+2,2g_3+2]}|}\sum_{\scriptscriptstyle (f_1,f_2,f_3)\in\widehat{\mathcal{F}}_{[2g_1+2,2g_2+2,2g_3+2]}}
\sum_{x\in\mathbb{P}^1(\mathbb{F}_q)}
(\chi(f\cdot f_1(x))+\chi(f\cdot f_2(x))+\chi(f_1\cdot f_2(x)))^k=
$$
$$
=(-1)^k q(q^2-1)\sum_{l=1}^{k}c(k,l)\sum_{(x,b)\in P_{k,l}}\frac{1}{|\widehat{\mathcal{F}}_{[2g_1+2,2g_2+2,2g_3+2]}|}
\sum_{\scriptscriptstyle (f_1,f_2,f_3)\in\widehat{\mathcal{F}}_{[2g_1+2,2g_2+2,2g_3+2]}} B(x,b,f_1,f_2,f_3),
$$
where, borrowing the notation in \cite[Sec. 5]{Betal09} 
$$
P_{k,l}=\left\{(x,b):x=(x_1,...,x_l)\in\mathbb{P}^1(\mathbb{F}_q)^l,\,x_i's\,\text{distinct,}\,b=(b_1,...,b_l)\in\mathbb{Z}^{l}_{>0},\,\sum_{i=1}^{l}b_i=k\right\},
$$
$$
 B(x,b,f_1,f_2,f_3)=\prod_{i=1}^{l}(\chi(f\cdot f_1(x_i))+\chi(f\cdot f_2(x_i))+\chi(f_1\cdot f_2(x_i)))^{b_i}
$$
and $c(k,l)$ is a certain combinatorial factor. We do not need exact formulas for it, but as it was notice in \cite{Betal09}
\begin{equation}\label{cota}
\sum_{l=1}^{k}c(k,l)\sum_{(x,b)\in P_{k,l}}1=(q+1)^k.
\end{equation}
Fix a vector $(x,b)\in P_{(k,l)}$. Then, the number
$$
\sum_{\scriptscriptstyle (f_1,f_2,f_3)\in\widehat{\mathcal{F}}_{[2g_1+2,2g_2+2,2g_3+2]}} \frac{B(x,b,f_1,f_2,f_3)}{|\widehat{\mathcal{F}}_{[2g_1+2,2g_2+2,2g_3+2]}|}=\sum_{\begin{array}{c}\scriptstyle(\epsilon_1,...\epsilon_l)\\ \scriptstyle\epsilon_i\in\{-1,1,3\}\end{array}}\left(\prod P_{\epsilon_i}\right)\left(\prod \epsilon_{i}^{b_i}\right)=
$$
$$
\sum_{\begin{array}{c}\scriptstyle(\epsilon_1,...\epsilon_l)\\ \scriptstyle\epsilon_i\in\{-1,1,3\}\end{array}}\left(\prod P_{\epsilon_i} \epsilon_{i}^{b_i}\right)=\prod_{i}\left(\frac{3^{b_i}q+6+(-1)^{b_i}3(q+2)}{4(q+3)}\right)
(1+O(q^{-\frac{1-\epsilon}{2}min(n,n_{1},n_{2})+l}))
$$
We obtain that
$$
M_k(q,g_1,g_2,g_3)=(-1)^k q(q^2-1)\sum_{l=1}^{k}c(k,l)\sum_{(x,b)\in P_{k,l}}\prod_{i}\left(\frac{3^{b_i}q+6+(-1)^{b_i}3(q+2)}{4(q+3)}\right)
(1+O(q^{-\frac{1-\epsilon}{2}min(n,n_{1},n_{2})+k})).
$$
where the error term is estimated using \ref{cota}.

On the other hand, the corresponding moment of the normalized sum of our random variables is
$$
\mathbb{E}\left( \left(\frac{1}{\sqrt{1+q}}\sum_{i=1}^{1+q}X_i\right)^k\right)=\frac{1}{(1+q)^{k/2}}\sum_{l=1}^k
\sum_{(i,b)\in A_{k,l}}\mathbb{E}(X_{i_1}^{b_1}...X_{i_l}^{b_l}),$$
where
$$
A_{k,l}=\left\{(i,b):i=(i_1,...,i_l),1\leq i_j\leq q+1,\,i_{j}'s\,\text{distinct,}\,b=(b_1,...,b_l)\in\mathbb{Z}^{l}_{>0},\,\sum_{i=1}^{l}b_i=k\right\}
$$
is clearly isomorphic to $P_{k,l}$.

Since
$$
\mathbb{E}(X_{i}^{b})=\frac{3^bq+6+(-1)^b3(q+2)}{4(q+3)}
$$
and $X_1,...,X_{1+q}$ are independent, we get the equality in the statement of the theorem.
\end{proof}

\section{General case: the family of $r$-quadratic curves}

\begin{defn}
Let $r\geq 1$ be an integer. We call $r$-quadratic curve a  non-singular projective curve $C/\mathbb{F}_q$ together with a morpshim $\varphi: C\longrightarrow \mathbb{P}^1_{\mathbb{F}_q}$ defined over $\mathbb{F}_{q}$ such that it induces a function field extension with Galois group 
$$
\mathrm{Gal}(\mathbb{F}_q(C)/\mathbb{F}_q(t))\simeq (\mathbb{Z}/2\mathbb{Z})^{r} 
$$

Note that when $r=1$ and $r=2$ we find respectively the definition of hyperelliptic curve and that of biquadratic curve given in Section \ref{sec:family}.
\end{defn}

The family of $r$-quadratic curves is studied in \cite{Pries2005b} where it is proved that when we consider the family of this curve defined over $\bar{\mathbb{F}}_{q}$ we obtain a course moduli space over $\mathbb{Z}[1/2]$. In the same paper a formula for the genus of an $r$-quadratic curve is also proved.

\begin{proposition} 
Let $r\geq 1$ be an integer and let $C/\mathbb{F}_{q}$ be an $r$-quadratic curve.

\begin{enumerate}
\item
An affine model of $C$ in $\mathbb{A}^{r+1}$ is given by
$$
C:\begin{cases}y_{1}^{2}=h_1(t)\\...\\y_{r}^{2}=h_r(t)\end{cases}
$$
where each $h_{i}$ is square-free and different (up to squares) from $\prod_{j\in J}h_{j}$, for every non-empty subset $J\subseteq\{1,\dots,r\}, J\neq \{i\}$. 

\item
For every non-empty $J\subseteq\{1,\dots,r\}$, the affine equation
$$
y^{2}=\prod_{j\in J}h_{j}(t)
$$
defines a quadratic subextension of $\mathbb{F}_q(C)/k$ and every quadratic subextension of $\mathbb{F}_q(C)/k$ is obtained in this way, so there are $2^r-1$ of them.

\item
If we write $C_J$ for the hyperelliptic curve given by the affine equation $y^2=\prod_{J\in J}h_{j}(t)$, then 
$$
g(C)=\sum_{J\subseteq\{1,\dots,r\}}g(C_{J}).
$$
\end{enumerate}
\end{proposition}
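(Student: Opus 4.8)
The plan is to establish the three parts essentially simultaneously by reducing everything to the structure theory of the elementary abelian $2$-group $(\Z/2\Z)^r$ and then applying Riemann-Hurwitz, exactly as in the biquadratic case treated in Proposition \ref{genus}. The key observation is that since $\Char(k)\neq 2$, Kummer theory identifies the index-$2$ subgroups of $G:=\Gal(\F_q(C)/k)\simeq(\Z/2\Z)^r$ with the nontrivial characters of $G$, and hence with the $2^r-1$ nonzero elements of the dual $\F_2$-vector space. Each such character corresponds to a quadratic subextension $k(\sqrt{h})\subseteq \F_q(C)$, and the multiplicative structure of the $h$'s matches the additive structure of $\F_2^r$: if we let $e_1,\dots,e_r$ be a basis with $\sqrt{h_i}\leftrightarrow e_i$, then a nonempty subset $J$ corresponds to $\sum_{j\in J}e_j$ and to the class of $\prod_{j\in J}h_j$ modulo squares.

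For part (1), I would first invoke the structure theorem for finite abelian extensions of characteristic $\neq 2$: any $(\Z/2\Z)^r$-extension of $k$ is generated by $r$ independent square roots $\sqrt{h_1},\dots,\sqrt{h_r}$, and the requirement that the $h_i$ generate an extension with full Galois group $(\Z/2\Z)^r$ is precisely that their classes in $k^\times/(k^\times)^2$ are $\F_2$-linearly independent. I would then observe that linear independence is equivalent to the stated condition that no $h_i$ equals $\prod_{j\in J}h_j$ up to squares for $J\neq\{i\}$; after multiplying out, this is the assertion that $h_i\prod_{j\in J}h_j$ is never a square, which rules out any nontrivial linear relation. Choosing representatives that are square-free is automatic since $\sqrt{h}=\sqrt{h'}$ whenever $h/h'$ is a square.

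For part (2), the bijection between quadratic subextensions and nonempty $J\subseteq\{1,\dots,r\}$ follows from the bijection between index-$2$ subgroups of $G$ and nonzero characters: an element $\chi=\sum_{j\in J}e_j^\vee$ of the dual fixes the subfield $k(\sqrt{\prod_{j\in J}h_j})$, and counting the nonzero elements of $\F_2^r$ gives exactly $2^r-1$ subextensions. For part (3) I would run Riemann-Hurwitz on $\varphi\colon C\to\PP^1$ for the $G$-cover and on each $\varphi_J\colon C_J\to\PP^1$ for the quadratic quotient, then compare. The essential combinatorial input, generalizing the identity $2|R_1\cup R_2\cup R_3|=|R_1|+|R_2|+|R_3|$ used in Proposition \ref{genus}, is that a given branch point $P\in\PP^1$ ramifies in exactly half of the $2^r-1$ hyperelliptic quotients $C_J$, namely in those $J$ whose associated character is nontrivial on the inertia group at $P$. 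Since each inertia group in a $(\Z/2\Z)^r$-cover is itself an $\F_2$-subspace (indeed, away from wild ramification the inertia at each branch point of a tame elementary abelian cover is cyclic of order $2$), any character is nontrivial on it for exactly $2^{r-1}$ of the $2^r$ elements, i.e. $2^{r-1}$ of the nonzero characters; summing the genus contributions across all $C_J$ then telescopes to $g(C)$.

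The main obstacle I anticipate is the ramification bookkeeping in part (3): I must verify that the factor $2^{r-1}$ is correct and that the Euler-characteristic contributions assemble without an extra term coming from the behavior of the inertia groups, in particular handling the point at infinity and any branch point where the inertia group might a priori be larger than cyclic of order $2$. In the tame setting (which holds here since $p\neq 2$) the local-global compatibility of Riemann-Hurwitz forces each inertia subgroup to be generated by a single reflection, so the counting reduces to the clean statement that each nonzero character of $G$ restricts nontrivially to a fixed order-$2$ inertia subgroup exactly half the time; once that is pinned down, the formula $g(C)=\sum_{J}g(C_J)$ drops out by linearity of the Euler characteristic over the quotient covers.
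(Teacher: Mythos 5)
Your argument is correct, and it is worth noting that the paper does not actually prove this proposition: parts (1) and (2) are treated as standard Kummer/Galois theory, and for the genus formula the authors defer to Glass--Pries \cite{Pries2005b}. What you have written is the natural in-house proof, namely the direct generalization of the paper's own proof of Proposition \ref{genus} for $r=2$. There the key identity is $2|R_{1}\cup R_{2}\cup R_{3}|=|R_{1}|+|R_{2}|+|R_{3}|$, which is exactly your statement that each branch point of $\pi$ ramifies in $2^{r-1}$ of the $2^{r}-1$ quadratic quotients, specialized to $r=2$. Your justification of that count is sound: tame inertia is cyclic, a cyclic subgroup of $(\Z/2\Z)^{r}$ has order $2$, and of the $2^{r}$ characters of $(\Z/2\Z)^{r}$ exactly $2^{r-1}$ are nontrivial on a fixed order-$2$ subgroup, all of them nonzero characters. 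Feeding this into Riemann--Hurwitz gives $g(C)=1-2^{r}+2^{r-2}|R|$ for the full cover and $g(C_{J})=\frac{|R_{J}|}{2}-1$ for each quotient, and $\sum_{J}|R_{J}|=2^{r-1}|R|$ makes the two sides agree; the point at infinity needs no separate treatment since it is just another point of $\PP^{1}$ in the branch locus. Two small points to tighten: in part (1) the stated condition on the $h_{i}$ must be checked to be equivalent to $\F_2$-linear independence of their classes in $k^{\times}/(k^{\times})^{2}$ including the singleton case (each $h_{i}$ itself must be a nonsquare, which follows from square-freeness together with nontriviality of the corresponding quadratic extension), and your phrase about ``linearity of the Euler characteristic'' should ultimately be replaced by the explicit Riemann--Hurwitz bookkeeping above, since the two covers have different degrees and the cancellation of the constant terms ($1-2^{r}$ versus $-(2^{r}-1)$) is where the identity actually closes.
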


Next Theorem gives a convenient way of describing the family of $r$-quadratic curves, and it is the key point to compute the distribution of the Frobenius traces.

\begin{theorem}\label{family}
There exists a one-to-one correspondence between the 
set of $r$-quadratic extensions of $k$ and the set of unordered $2^{r}-1$-tuples of square-free and pairwise coprime polynomials. 
\end{theorem}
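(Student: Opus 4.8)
The plan is to establish the bijection by constructing explicit maps in both directions and checking they are mutually inverse. The central idea is Kummer theory: since $\Char(k)\neq 2$ and $k$ contains the square roots of unity, every $r$-quadratic extension $K/k$ corresponds bijectively to a subgroup $H\subseteq k^{*}/(k^{*})^{2}$ of order $2^{r}$ via $K=k\left(\sqrt{H}\right)$, where the Galois group is canonically $\Hom(H,\{\pm1\})\simeq(\Z/2\Z)^{r}$. The nonzero classes of $H$ are in bijection with the $2^{r}-1$ nontrivial quadratic subextensions, exactly as enumerated in part (2) of the preceding Proposition.

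\textbf{From tuples to extensions.} Given an unordered $(2^{r}-1)$-tuple of square-free, pairwise coprime polynomials, I would first explain how it encodes such a subgroup $H$. The subtle point is that a subgroup of order $2^{r}$ in $k^{*}/(k^{*})^{2}$ is an $\F_{2}$-vector space of dimension $r$, and its $2^{r}-1$ nonzero vectors satisfy the relation that the product of any two (as square-free classes, i.e. dividing out the squared gcd) equals a third. I would identify a square-free polynomial class with its support (a finite set of monic irreducibles, together with a leading-coefficient bit in $\F_{q}^{*}/(\F_{q}^{*})^{2}$), so that the group law becomes symmetric difference on supports plus addition in $\F_{2}$ on the leading bit. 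The \emph{pairwise coprimality} condition is precisely what forces these $2^{r}-1$ classes to be the nonzero elements of an $r$-dimensional $\F_{2}$-subspace: coprimality means the supports pairwise intersect trivially except for the forced coincidences dictated by the subspace structure, so each irreducible $P$ dividing some member lies in the support of exactly $2^{r-1}$ of the classes, matching the pattern of a nonzero linear functional's level sets. Concretely, I would pick $r$ members of the tuple that form a basis and verify the remaining $2^{r}-1-r$ members are exactly the products-modulo-squares of the basis subsets, using coprimality to see there is no cancellation or overlap; this yields $K=k(\sqrt{g_{1}},\dots,\sqrt{g_{r}})$ for a chosen basis $g_{1},\dots,g_{r}$.

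\textbf{From extensions to tuples.} Conversely, given an $r$-quadratic $K/k$, part (2) of the Proposition furnishes exactly $2^{r}-1$ quadratic subextensions, each of the form $y^{2}=\prod_{j\in J}h_{j}$ for a nonempty $J\subseteq\{1,\dots,r\}$. For each I take the unique square-free monic-normalized representative of $\prod_{j\in J}h_{j}$ modulo squares, obtaining a $(2^{r}-1)$-tuple of square-free polynomials; I must then check these are pairwise coprime, which follows because two distinct quadratic subextensions are linearly independent in $k^{*}/(k^{*})^{2}$, so no irreducible $P$ can appear to the first power in two classes whose product is again a first-power class unless it cancels---but the square-free representatives are taken after reduction, so a shared prime factor would violate independence. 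Taking the tuple unordered discards the arbitrary labeling, which is exactly what matches the intrinsic (unordered) set of subextensions.

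\textbf{The main obstacle} will be the bookkeeping at the primes dividing the gcds: unlike the $r=2$ case treated in Remark \ref{coprime}, where one simply factors $h_{i}=ff_{i}$, for general $r$ one must track, for each irreducible $P$, the $\F_{2}$-linear functional ``$P$ appears in this class'' on the subspace $H$, and verify that pairwise coprimality of the $2^{r}-1$ square-free representatives is equivalent to each such functional being either zero or surjective (hence each $P$ lying in precisely $2^{r-1}$ of the nonzero classes). Establishing this equivalence cleanly---and thereby showing the two constructions are mutually inverse and respect the unordered structure---is the real content; once it is in place, the bijection and the bookkeeping of leading coefficients (the normalization that a square leading coefficient is set to $1$) follow routinely.
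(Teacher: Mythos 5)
Your two maps do not land in the right sets, because you have conflated two different $(2^r-1)$-tuples attached to an $r$-quadratic extension. The tuple in the theorem is \emph{not} the tuple of square-free representatives of the $2^r-1$ quadratic subextensions. Already for $r=2$ (see Remark \ref{coprime}) the three subextensions are generated by $ff_1$, $ff_2$, $f_1f_2$, which pairwise share a common factor; the pairwise coprime triple of the theorem is $(f,f_1,f_2)$. In general, for an irreducible $P$ the map $h\mapsto v_P(h)\bmod 2$ is an $\F_2$-linear functional on the Kummer subgroup $H\subseteq k^*/(k^*)^2$, so if $P$ divides the square-free representative of any nonzero class of $H$ it divides exactly $2^{r-1}\geq 2$ of them. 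Hence your ``extensions to tuples'' map never produces a pairwise coprime tuple once $r\geq 2$ and $K\neq k$, and the coprimality argument you give via ``linear independence'' is not valid: having $P\mid h_1$ and $P\mid h_2$ with $P^2$ cancelling in $h_1h_2$ is perfectly compatible with $h_1,h_2$ being independent modulo squares. Symmetrically, your ``tuples to extensions'' map cannot work as described: a set of $2^r-1\geq 3$ pairwise coprime nonconstant square-free polynomials is never closed under multiplication modulo squares, so it cannot be the set of nonzero elements of an $r$-dimensional subspace of $k^*/(k^*)^2$, and the claim that coprimality ``forces'' the subspace structure (or that each $P$ lies in $2^{r-1}$ of the tuple members while the members are coprime) is self-contradictory.

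The correct correspondence, which is what the paper's proof sets up, indexes the coprime tuple by the nonempty subsets $S\subseteq\{1,\dots,r\}$: given $K=k(\sqrt{h_1},\dots,\sqrt{h_r})$, one sets $f_S=\prod P$ over the irreducibles $P$ with $\{j: v_P(h_j)\ \text{odd}\}=S$, equivalently the part of $\gcd(h_j:\, j\in S)$ coprime to all $h_j$ with $j\notin S$; these $f_S$ are square-free and pairwise coprime by construction, and conversely $h_i=\prod_{S\ni i}f_S$ recovers the extension. Your Kummer-theoretic framing is a reasonable starting point, but the object it naturally produces is the non-coprime tuple of subextension generators; the refinement ``group the primes according to which subset of the $h_j$ they divide to odd order'' is the actual content of the theorem and is missing from your argument.
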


\begin{proof}
Let $K=k(\sqrt{h_{1}},\dots,\sqrt{h_{r}})$ be an $r$-quadratic extension. We associate to such an extension, a $2^{r}-1$-tuple $(f_{1},\dots,f_{2^{r}-1})$ of square-free and pairwise coprime polynomials in the following way: for every $i\in\{1,...,2^r-1\}$, we write $B^{i}_{1}\dots B^{i}_{r}$ for the representation of the integer $i$ in base $2$ (so $B^{i}_{j}\in\{1,0\}$ for every $1\leq j\leq r$) and we define $m_{i}$ to be the greatest common divisor of all polynomials $h_{j}$ such that the $B_{j}=1$. 

We then define the polynomials $f_{i}$ as the maximum factor in the decomposition of $m_{i}$ which is coprime to all the $h_{j}$ such that $B^{i}_{j}=0$. Notice that, in particular, $f_{2^{r}-1}=(h_{1},\dots,h_{r})$.

Viceversa, given a tuple $(f_{1},\dots,f_{2^{r}-1})$ of square-free and pairwise coprime polynomials, we define the $r$-quadratic extension $k(\sqrt{h_{1}},\dots,\sqrt{h_{r}})$, where $h_{i}$ is defined to be the product of the $f_{j}$ such that the $i$-th digit of $j$ in base $2$ is $1$, i.e. $B_{i}^{j}=1$.
\end{proof}

Notice that, with notations of Theorem \ref{family}, we have that $f_{2^{r}-1}=(h_{1},\dots,h_{r})$.

After Theorem \ref{family}, we are led to define the following sets:
$$
\mathcal{F}_{(n_1,\dots,n_{2^r-1})}:=\{(f_1,\dots,f_{2^r-1})\in\mathcal{F}_{n_1}\times\dots\times\mathcal{F}_{n_{2^r-1}}:\,(f_{i},f_{j})=1,\;\;i,j=1,\dots,2^{r}-1, i\neq j\},
$$
$$
\widehat{\mathcal{F}}_{(n_1,\dots,n_{2^r-1})}:=\{(f_1,\dots,f_{2^r-1})\in\widehat{\mathcal{F}}_{n_1}\times\dots\times\widehat{\mathcal{F}}_{n_{2^r-2}}\times\mathcal{F}_{n_{2^r-1}}:\,(f_{i},f_{j})=1,\;\;i,j=1,\dots,2^{r}-1,i\neq j\},
$$

It is easy to prove that if $C$ is an $r$-quadratic curve whose affine model is given by equations $y_{i}^{2}=h_{i}(t),\;i=1,\dots,r$, then 
$$
\#C(\mathbb{F}_{q})=\sum_{x\in\mathbb{P}^{1}_{\mathbb{F}_q}}\prod_{i=1}^{r}(1+\chi(h_{i}(x))).
$$
Now, we express this formula in terms of the polynomials $f_{1},\dots,f_{2^r-1}$ defined in the proof of Theorem \ref{family}.

Let us fix $(f_{1},\dots,f_{2^r-1})\in\widehat{\mathcal{F}}_{(n_1,\dots,n_{2^r-1})}$. For every $i\in\{1,\dots,2^r-1\}$, we define the polynomial $p_{i}$ as the square-free part of the product of the polynomials $f_{j}$ such that the relation between the representations in base $2$ of $i$ and $j$ is the following: $B_{k}^{i}=1\Rightarrow B_{k}^{j}=1$. It is immediate to see that, inside the correspondence of Theorem \ref{family}, the square-free polynomials $p_{1},\dots,p_{2^r-1}$ define all the quadratic subextensions of the $r$-quadratic extension defined by the tuple $(f_{1},\dots,f_{2^r-1})$. 

Then we define
$$
\widehat{S}(f_{1},\dots,f_{2^r-1}):=\sum_{x\in\mathbb{P}^{1}}\sum_{i=1}^{2^r-1}\chi(p_{i}(x))
$$
and we can rewrite
$$
\#C(\mathbb{F}_{q})=\sum_{x\in\mathbb{P}^{1}}\prod_{i=1}^{2^r-1}(1+\chi(p_{i}(x)))=q+1+\widehat{S}(f_{1},\dots,f_{2^r-1}).
$$
When $r=2$ we find the formula of Section \ref{distributionFq}.

\begin{lemma}\label{guay_gen} 
Let $n_{1},\dots,n_{\beta}$ be positive integers. For $0\leq l\leq q$, let $x_1,...,x_l$ be distinct elements of $\mathbb{F}_q$. Let $U\in\mathbb{F}_q[X]$ be such that $U(x_i)\neq 0$ for $i=0,...,l$. Let be $a^{1}_1,...,a^{1}_l,\dots,a^{\beta}_1,...,a^{\beta}_l\in\mathbb{F}_{q}^{*}$. The number of elements in the set
\begin{multline*}
\mathcal{R}^{U}_{n_{1},\dots,n_{\beta}}((a_{1}^{j},\dots,a_{l}^{j})_{1\leq j\leq\beta}):=\{(f_1,\dots,f_{\beta})\in\mathcal{F}_{n_1}\times\dots\times\mathcal{F}_{n_{\beta}}:\,\\(f_j,U)=1,\,(f_{j},f_{k})_{k\neq j}=1,\,f_j(x_i)=a^{j}_i,1\leq i\leq l,1\leq j,k\leq\beta\}
\end{multline*}
is the number
$$
R^{U}_{n_{1},\dots,n_{\beta}}(l)=\frac{q^{n_{1}+\dots+n_{\beta}}L_{\beta}}{\zeta_{q}^{\beta}(2)}\left(\frac{q}{(q-1)^{\beta}(q+\beta)}\right)^{l}\prod_{P\mid U}\left(\frac{1}{1+\beta|P|^{-1}}\right)(1+O(q^{l-\frac{\mathrm{min}(n_1,\dots,n_{\beta})}{2}})),
$$
where the constant
$$
L_{\beta}:=
\prod_{P\,\text{prime}}\left(\frac{|P|^{\beta-1}(|P|+\beta)}{(|P|+1)^{\beta}}\right).
$$
In a similar way to Remark \ref{non_vanishing_L}, we can see that $L_{\beta}$ is bounded.
\end{lemma}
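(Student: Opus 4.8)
The plan is to argue by induction on $\beta$, with Lemma \ref{S} (the case $\beta=1$) as base case and Lemma \ref{guay} (the case $\beta=2$) as a worked instance of the inductive step. The guiding observation is that monic square-free polynomials $f_1,\dots,f_\beta$ are pairwise coprime exactly when their product is square-free, i.e. prime by prime each $P$ divides at most one $f_j$ and only to the first power; this is what makes the count split into Euler products. I would peel off the last coordinate: once $f_\beta$ is fixed, the tuple $(f_1,\dots,f_{\beta-1})$ must be pairwise coprime and coprime to the enlarged modulus $Uf_\beta$, whence
$$
R^{U}_{n_1,\dots,n_\beta}(l)=\sum_{\substack{f_\beta\in\mathcal{F}_{n_\beta},\ (f_\beta,U)=1\\ f_\beta(x_i)=a_i^\beta,\ 1\le i\le l}} R^{Uf_\beta}_{n_1,\dots,n_{\beta-1}}(l).
$$
Feeding in the inductive hypothesis and using $(f_\beta,U)=1$ to factor $\prod_{P\mid Uf_\beta}=\prod_{P\mid U}\cdot\prod_{P\mid f_\beta}$, the product over $P\mid U$ and the overall constant coming from the $(\beta-1)$-case pull out of the sum, leaving a weighted sum over $f_\beta$ to be evaluated.

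The remaining sum
$$
\Sigma=\sum_{\substack{f_\beta\in\mathcal{F}_{n_\beta},\ (f_\beta,U)=1\\ f_\beta(x_i)=a_i^\beta}}\ \prod_{P\mid f_\beta}\frac{1}{1+(\beta-1)|P|^{-1}}
$$
I would treat exactly as in the proof of Proposition \ref{final}. The value conditions $f_\beta(x_i)=a_i^\beta$ are congruences modulo the degree-one primes $t-x_i$, and I would linearize them by orthogonality of the characters of $(\mathbb{F}_q[t]/(t-x_i))^{*}\cong\mathbb{F}_q^{*}$: the principal character yields the main term (contributing a factor $\tfrac{1}{q-1}$ per point, from selecting one of the $q-1$ admissible nonzero values), while the nonprincipal characters give entire $L$-functions that feed only into the error. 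For the main term I would assemble the Dirichlet series $G_\beta(w)=\sum_V c(V)|V|^{-w}$ with $c(V)=\mu^2(V)\prod_{P\mid V}(1+(\beta-1)|P|^{-1})^{-1}$ supported on $V(x_i)\neq0$ and $(V,U)=1$, factor it as $\tfrac{\zeta_q(w)}{\zeta_q(2w)}H_\beta(w)$ times explicit local corrections at the primes $t-x_i$ and at $P\mid U$, with $H_\beta$ absolutely convergent for $\re(w)>0$, locate the simple pole at $w=1$, and apply the function-field Wiener–Ikehara theorem \cite[Theorem~17.1]{Rosen02} to extract $\sum_{\deg V=n_\beta}c(V)=\tfrac{K_\beta}{\zeta_q(2)}(\cdots)^l q^{n_\beta}+O(q^{\epsilon(n_\beta+l)})$, where $K_\beta:=H_\beta(1)$.

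Collecting the factors, every constant telescopes correctly. One finds $K_\beta=\prod_P\tfrac{|P|(|P|+\beta)}{(|P|+1)(|P|+\beta-1)}$, so that $L_\beta=L_{\beta-1}K_\beta$; the power of $\zeta_q(2)$ increases from $\beta-1$ to $\beta$; the local correction at $P\mid U$ converts $\prod_{P\mid U}(1+(\beta-1)|P|^{-1})^{-1}$ into $\prod_{P\mid U}(1+\beta|P|^{-1})^{-1}$; and the per-point density multiplies as $\tfrac{q}{(q-1)^{\beta-1}(q+\beta-1)}\cdot\tfrac{q+\beta-1}{(q-1)(q+\beta)}=\tfrac{q}{(q-1)^\beta(q+\beta)}$, the second factor combining the residue contribution $\tfrac{q+\beta-1}{q+\beta}$ (from excluding the primes $t-x_i$) with the $\tfrac{1}{q-1}$ from the value conditions. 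The boundedness of $L_\beta$ then follows as in Remark \ref{non_vanishing_L}, since $\tfrac{|P|^{\beta-1}(|P|+\beta)}{(|P|+1)^\beta}=1+O(|P|^{-2})$.

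Finally, the step I expect to be most delicate is not the algebra of constants but the uniform control of the error term. The implied constant in the inductive hypothesis must be uniform in the modulus $Uf_\beta$, and summing the $(\beta-1)$-error over the roughly $q^{n_\beta}$ polynomials $f_\beta$, together with the Tauberian error and the contribution of the nonprincipal characters, must be reconciled into the single clean bound $O(q^{l-\min(n_1,\dots,n_\beta)/2})$. Keeping track of how the exponents in $n_1,\dots,n_\beta$ and $l$ combine at each stage, and verifying that the worst of them is governed by $\min(n_1,\dots,n_\beta)$, is the real bookkeeping obstacle, to be handled as in Proposition \ref{final} and \cite[Sec.~4]{Betal09}.
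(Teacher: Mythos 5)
Your proposal is correct in outline and the constants all check out (I verified that $K_\beta=\prod_P\frac{|P|(|P|+\beta)}{(|P|+1)(|P|+\beta-1)}$ does satisfy $L_{\beta-1}K_\beta=L_\beta$, that the per-point densities multiply to $\frac{q}{(q-1)^\beta(q+\beta)}$, and that the local factors at $P\mid U$ convert correctly), but your inductive step is genuinely different from the paper's. The paper also inducts on $\beta$, but it splits off $f_\beta$ by M\"obius inversion over the gcd $D=(f_1\cdots f_{\beta-1},f_\beta)$: the term $f(D)$ factors as $(\beta-1)^{\omega(D)}$ (choosing which $f_j$ each prime of $D$ divides) times $R^{UD}_{n_1,\dots,n_{\beta-1}-\deg D}(l)\cdot S^{UD}_{n_\beta-\deg D}(l)$, so the whole step reduces to already-established counts with an enlarged modulus $UD$, and the sum $\sum_D\mu(D)f(D)$ is evaluated as an absolutely convergent Euler product --- no new Tauberian input is needed, exactly as in the $\beta=2$ case (Lemma \ref{guay}). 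You instead condition on $f_\beta$ and write $R^U_{n_1,\dots,n_\beta}(l)=\sum_{f_\beta}R^{Uf_\beta}_{n_1,\dots,n_{\beta-1}}(l)$, which is valid, but it leaves you with the weighted sum $\sum_{f_\beta}\prod_{P\mid f_\beta}(1+(\beta-1)|P|^{-1})^{-1}$ over square-free $f_\beta$ with \emph{prescribed values} $f_\beta(x_i)=a_i^\beta$. The analogous sum in Proposition \ref{final} only imposes $f(x_i)\neq 0$, so you cannot quote it; you must supply the character-orthogonality step and control the $(q-1)^l-1$ nonprincipal $L$-function contributions yourself (this is essentially re-proving a weighted analogue of Lemma \ref{S}). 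That works --- the nonprincipal terms contribute a relative error $O(q^{l-n_\beta/2})$, within the claimed bound --- but it is extra analytic machinery that the paper's route avoids by reusing Lemma \ref{S} as a black box. Your identification of the uniformity of the inductive error term in the modulus $Uf_\beta$ as the delicate point is apt; the paper's arrangement sidesteps it because the modulus $UD$ only enters through a rapidly convergent sum over $D$.
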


\begin{proof}
We will prove it by induction on $\beta$. We find Lemma \ref{S} for $\beta=1$, and for $\beta=2$ we find Lemma \ref{guay}. Assume that the equality of the statement is true for $\beta-1$. 

By inclusion-exclusion principle, with
\begin{equation*}
f(D)=|\{(f_1,...,f_\beta)\in \mathcal{R}^{U}_{n_{1},\dots,n_{\beta-1}}((a_{1}^{j},\dots,a_{l}^{j})_{1\leq j\leq\beta-1})\times \mathcal{S}_{n_\beta}^{U}(a_{1}^{\beta},\dots,a_{l}^{\beta}):
\,D|(f_1...f_{\beta-1},f_\beta)\}|,
\end{equation*}

\begin{equation*}
g(D)=|\{(f_1,...,f_\beta)\in \mathcal{R}^{U}_{n_{1},\dots,n_{\beta-1}}((a_{1}^{j},\dots,a_{l}^{j})_{1\leq j\leq\beta-1})\times \mathcal{S}_{n_\beta}^{U}(a_{1}^{\beta},\dots,a_{l}^{\beta}):\,D=(f_1...f_{\beta-1},f_\beta)\}|,
\end{equation*}

we have
$$
R^{U}_{n_{1},\dots,n_{\beta}}(l)=g(1)=\sum_{\scriptscriptstyle D,\,D(x_i)\neq 0,(D,U)=1}\mu (D) f(D).
$$

But notice that when $(D,U)=1$ and $D$ is square-free
$$
f(D)=\prod_{P|D}(\beta-1) \cdot R^{UD}_{n_{1},\dots,n_{\beta-1}-\text{deg}(D)}(l)\cdot S^{UD}_{n_{\beta}-\text{deg}(D)}(l)
$$
hence, by induction hypothesis,
\begin{multline*}
f(D)=\frac{q^{n_{1}+\dots+n_{\beta}}L_{\beta-1}}{\zeta_{q}^{\beta}(2)}\left(\frac{q^{2}}{(q^{2}-1)(q-1)^{\beta-1}(q+\beta-1)}\right)^l\\
\prod_{P|U}\frac{1}{(1+(\beta-1)|P|^{-1})(1+|P|^{-1})}\prod_{P|D}\frac{(\beta-1)|P|^{-2}}{(1+(\beta-1)|P|^{-1})(1+|P|^{-1})}\left(1+O(q^{l+\frac{\text{deg}(D)}{2}-\frac{\text{min}(n_1,...,n_\beta)}{2}})\right).
\end{multline*}
So, one has
\begin{multline*}
R^{U}_{n_{1},\dots,n_{\beta}}(l)=\sum_{\scriptscriptstyle D,\,D(x_i)\neq 0,(D,U)=1}\mu (D) f(D)=\\
=\frac{q^{n_{1}+\dots+n_{\beta}}L_{\beta-1}}{\zeta_{q}^{\beta}(2)}\left(\frac{q^{2}}{(q^{2}-1)(q-1)^{\beta-1}(q+\beta-1)}\right)^l
\prod_{P|U}\frac{1}{(1+(\beta-1)|P|^{-1})(1+|P|^{-1})}\cdot\\
\sum_{\begin{array}{c}\scriptscriptstyle D(x_i)\neq 0,\,(D,U)=1\\\scriptscriptstyle \text{deg}(D)\leq \text{min}(n_1,...,n_\beta)\end{array}}\mu(D)\prod_{P|D}\frac{(\beta-1)|P|^{-2}}{(1+(\beta-1)|P|^{-1})(1+|P|^{-1})}\left(1+O(q^{l-\frac{\text{min}(n_1,...,n_\beta)}{2}})\right).
\end{multline*}
Now, we observe that
\begin{multline*}
\sum_{\begin{array}{c}\scriptscriptstyle D\, D(x_i)\neq 0,\,(D,U)=1\\ \scriptscriptstyle\text{deg}(D)\leq \text{min}(n_1,...,n_\beta)\end{array}}\mu(D)\prod_{P|D}\frac{(\beta-1)|P|^{-2}}{(1+(\beta-1)|P|^{-1})(1+|P|^{-1})}=\\
\sum_{\scriptscriptstyle D,\,D(x_i)\neq 0,\,(D,U)=1}\mu(D)\prod_{P|D}\frac{(\beta-1)|P|^{-2}}{(1+(\beta-1)|P|^{-1})(1+|P|^{-1})}+O(q^{-2\text{min}(n_1,...,n_\beta)}),
\end{multline*}
where we have that
$$
\begin{array}{l}
\sum\limits_{\scriptscriptstyle D,\,D(x_i)\neq 0,\,(D,U)=1}\mu(D)\prod\limits_{P|D}\frac{(\beta-1)|P|^{-2}}{(1+(\beta-1)|P|^{-1})(1+|P|^{-1})}=
\\
\,
\\
=\left( 1-\frac{(\beta-1)q^{-2}}{(1+(\beta-1)q^{-1})(1+q^{-1})}\right)^{-l}
\prod\limits_{P|U}\left( 1-\frac{(\beta-1)|P|^{-2}}{(1+(\beta-1)|P|^{-1})(1+|P|^{-1})}\right)^{-1}\prod\limits_{P\,\text{prime}}\left(1-\frac{(\beta-1)|P|^{-2}}{(1+(\beta-1)|P|^{-1})(1+|P|^{-1})}\right)=
\\
\,
\\
=\left(  \frac{(q+\beta-1)(q+1)}{q(q+\beta)}\right)^{l}\prod\limits_{P|U}\left(  \frac{\beta|P|^{-1}+1}{(1+(\beta-1)|P|^{-1})(1+|P|^{-1})}\right)^{-1}\prod\limits_{P\,\text{prime}} \frac{\beta|P|^{-1}+1}{(1+(\beta-1)|P|^{-1})(1+|P|^{-1})}.
\end{array}
$$

So, the result follows.
\end{proof}

\begin{proposition}\label{final_gen} For $0\leq l\leq q$, let $x_1,...,x_l$ be distinct elements of $\mathbb{F}_q$, and let be $a^{1}_1,...,a^{1}_l,\dots,a^{r}_1,...,a^{r}_l\in\mathbb{F}_{q}^{*}$. Then for any $1>\epsilon > 0$, we have
$$
|\{(f_1,\dots,f_{2^r-1})\in\mathcal{F}_{(n_1,\dots,n_{2^r-1})}:\,h_{j}(x_{i})=a^{j}_{i},\;1\leq i\leq l,\,1\leq j\leq r \}|=
$$
$$
=\frac{q^{n_{1}+\dots+n_{2^{r}-1}}L_{2^{r}-2}}{\zeta_{q}^{2^{r}-1}(2)}\left(\frac{q}{(q-1)^{r}(q+2^{r}-1)}\right)^{l}(1+O(q^{l-\frac{\mathrm{min}(n_1,\dots,n_{\beta})}{2}})).
$$
\end{proposition}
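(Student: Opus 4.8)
The plan is to deduce Proposition~\ref{final_gen} directly from Lemma~\ref{guay_gen} applied with $\beta=2^{r}-1$ and $U=1$, by summing the count over the fibre of the map that records the values of the \emph{individual} polynomials $f_{1},\dots,f_{2^{r}-1}$ at the points $x_{1},\dots,x_{l}$. The key observation is that prescribing the $r$ products $h_{j}(x_{i})=a^{j}_{i}$ is the same as prescribing the whole value vector $(f_{k}(x_{i}))_{k}$ up to a fibre whose size we can compute exactly, together with the fact that the main term of Lemma~\ref{guay_gen} is insensitive to the particular nonzero values imposed. This is cleaner than the Dirichlet-series argument used for $r=2$ in Proposition~\ref{final}, and it explains why the stated error term is the unmodified $O(q^{\,l-\min(n_{1},\dots,n_{2^{r}-1})/2})$ of Lemma~\ref{guay_gen}.

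First I would translate the conditions into conditions on the individual $f_{k}$. Fix $i$, set $y_{k}:=f_{k}(x_{i})$, and write $S_{k}:=\{j:B^{k}_{j}=1\}$ for the support of the binary expansion of $k$, so that $h_{j}=\prod_{k:\,j\in S_{k}}f_{k}$. Since the $f_{k}$ are pairwise coprime, each $x_{i}$ is a zero of at most one of them; but every $S_{k}$ is nonempty, so $y_{k}$ is a factor of $h_{j}(x_{i})$ for some $j$, and the hypothesis $a^{j}_{i}\in\mathbb{F}_{q}^{*}$ then forces every $y_{k}\in\mathbb{F}_{q}^{*}$. Fixing a generator of $\mathbb{F}_{q}^{*}$, the system $\prod_{k:\,j\in S_{k}}y_{k}=a^{j}_{i}$ $(1\le j\le r)$ becomes a linear system $M\xi=\eta$ over $\mathbb{Z}/(q-1)\mathbb{Z}$, where $M=(\mathbf 1_{j\in S_{k}})_{j,k}$ is the $r\times(2^{r}-1)$ incidence matrix. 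The columns of $M$ indexed by the singletons $S_{k}=\{j\}$ are the standard basis vectors, so the associated $\mathbb{Z}/(q-1)\mathbb{Z}$-linear map is surjective; hence at each $x_{i}$ there are exactly $(q-1)^{\,2^{r}-1-r}$ solutions $(y_{k})_{k}$, all nonzero.

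Next I would sum. Writing $\mathbf f=(f_{1},\dots,f_{2^{r}-1})\in\mathcal F_{(n_{1},\dots,n_{2^{r}-1})}$, the above gives
\begin{equation*}
|\{\mathbf f:\,h_{j}(x_{i})=a^{j}_{i}\}| \;=\; \sum_{(c^{k}_{i})}|\{\mathbf f:\,f_{k}(x_{i})=c^{k}_{i}\}|,
\end{equation*}
the outer sum ranging over the $(q-1)^{(2^{r}-1-r)l}$ value vectors $(c^{k}_{i})_{k,i}$ compatible with the conditions. Each inner cardinality is exactly $R^{1}_{n_{1},\dots,n_{2^{r}-1}}(l)$ by Lemma~\ref{guay_gen} (with $U=1$, $\beta=2^{r}-1$), whose main term does not depend on the chosen nonzero values. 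Summing the equal contributions and simplifying the powers of $q-1$ yields
\begin{equation*}
(q-1)^{(2^{r}-1-r)l}\,\frac{q^{n_{1}+\dots+n_{2^{r}-1}}L_{2^{r}-1}}{\zeta_{q}^{2^{r}-1}(2)}\Bigl(\tfrac{q}{(q-1)^{2^{r}-1}(q+2^{r}-1)}\Bigr)^{l}=\frac{q^{n_{1}+\dots+n_{2^{r}-1}}L_{2^{r}-1}}{\zeta_{q}^{2^{r}-1}(2)}\Bigl(\tfrac{q}{(q-1)^{r}(q+2^{r}-1)}\Bigr)^{l},
\end{equation*}
with the error term inherited verbatim from Lemma~\ref{guay_gen}. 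For $r=2$ this agrees with Proposition~\ref{final}, since $K L_{2}=L_{3}$ by the telescoping identity $\prod_{P}\frac{1+3|P|^{-1}}{(1+|P|^{-1})(1+2|P|^{-1})}\cdot\frac{|P|(|P|+2)}{(|P|+1)^{2}}=\prod_{P}\frac{|P|^{2}(|P|+3)}{(|P|+1)^{3}}$; in particular the constant in the statement should read $L_{2^{r}-1}$ rather than $L_{2^{r}-2}$.

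The step I expect to be most delicate is the fibre count. One must verify surjectivity of the multiplicative incidence map \emph{over the ring} $\mathbb{Z}/(q-1)\mathbb{Z}$ rather than over a field, which is exactly what guarantees that every fibre has the uniform size $(q-1)^{2^{r}-1-r}$; the singleton-column observation settles this cleanly. One must also rule out degenerate value vectors with some $c^{k}_{i}=0$, which is taken care of by the pairwise coprimality of the $f_{k}$ together with $a^{j}_{i}\ne 0$. Once these two points are secured, the rest is bookkeeping of powers of $q-1$ and a single invocation of Lemma~\ref{guay_gen}.
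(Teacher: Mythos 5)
Your proposal is correct and follows essentially the same route as the paper, which likewise proves this by applying Lemma~\ref{guay_gen} with $\beta=2^{r}-1$ and $U=1$ and multiplying by $(q-1)^{l(2^{r}-1-r)}$ to account for the ways of distributing the prescribed values $a^{j}_{i}$ among the $f_{k}(x_{i})$; your justification of the uniform fibre size via the surjectivity of the incidence map (using the singleton columns) is a detail the paper leaves implicit. Your observation that the constant should read $L_{2^{r}-1}$ rather than $L_{2^{r}-2}$ is also correct, as one checks both from Lemma~\ref{guay_gen} and from the $r=2$ case, where $K\,L=L_{3}$.
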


\begin{proof} Apply previous Lemma with $\beta=2^r-1$ and $U(x)=1$. Notice that the value of the polynomials $h_i$ can be fixed by controling the value of $r$ of the $f_j$ polynomials, so we need to multiply the previous number by $(q-1)^{l(2^r-1-r)}$.

\end{proof}

\begin{cor}\label{corofinal_gen}For $0\leq l\leq q$, let $x_1,...,x_l$ be distinct elements of $\mathbb{F}_q$, and let $a^{1}_1,...,a^{1}_l,\dots,a^{r}_1,...,a^{r}_l$ be elements in 
$\mathbb{F}_{q}$ such that for exactly $m$ values of $i$ we have $\prod_{i=1}^{r}a_{i}^{j}=0$. Then, for any $1>\epsilon > 0$, we have
$$
|\{(f_1,\dots,f_{2^r-1})\in\mathcal{F}_{(n_1,\dots,n_{2^r-1})}:\,h_{j}(x_{i})=a^{j}_{i},\;1\leq i\leq l,\,1\leq j\leq r \}|=
$$
$$
=\frac{q^{n_{1}+\dots+n_{2^{r}-1}}L_{2^{r}-2}}{\zeta_{q}^{2^{r}-1}(2)}\left(\frac{q}{(q-1)^{r}(q+2^{r}-1)}\right)^{l-m}\left(\frac{1}{(q-1)^{r-1}(q+2^{r}-1)}\right)^{m}
(1+O(q^{l-\frac{\mathrm{min}(n_1,\dots,n_{\beta})}{2}}).
$$
\end{cor}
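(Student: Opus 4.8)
The plan is to reduce the statement to the all-nonzero case already established in Proposition \ref{final_gen} by factoring out the linear factors responsible for the vanishing at the $m$ special points, and then summing over the values that get freed up by this factoring. This mirrors the passage from Proposition \ref{final} to the biquadratic Corollary \ref{corprefinal}.

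First I would record the structural input from Theorem \ref{family}: the polynomials $f_1,\dots,f_{2^r-1}$ are pairwise coprime, so at any point $x_i$ at most one of them can vanish. Writing $h_j=\prod_{k\,:\,j\in S_k}f_k$, where $S_k\subseteq\{1,\dots,r\}$ is the subset attached to $f_k$ under the correspondence of Theorem \ref{family}, the condition $\prod_{j=1}^r a_i^j=0$ at a special point $x_i$ forces exactly one $f_{k_0}$ to vanish there, and the index $k_0$ is read off from the zero pattern via $\{j:a_i^j=0\}=S_{k_0}$. Thus each of the $m$ special points is unambiguously assigned to one of the $f_k$.

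Next I would factor. Grouping the special points by their vanishing index, write each $f_k=\big(\prod_{x_i\mapsto k}(t-x_i)\big)f_k'$, so that the reduced tuple $(f_1',\dots,f_{2^r-1}')$ lies in $\mathcal{F}_{(n_1',\dots,n_{2^r-1}')}$ with $\sum n_k'=\sum n_k-m$ and now takes only nonzero values at every $x_i$. Since this factoring is a bijection onto reduced tuples carrying all-nonzero prescribed data, I can apply Proposition \ref{final_gen} to $(f_1',\dots,f_{2^r-1}')$ after fixing, at each point, the reduced products. At a nonzero point the data are unchanged; at a special point assigned to $k_0$ the products $h_j'(x_i)$ with $j\notin S_{k_0}$ stay equal to $a_i^j$, while those with $j\in S_{k_0}$ become nonzero and must be summed over. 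The per-point bookkeeping is the heart of the argument: Lemma \ref{guay_gen} gives, at a single point, the probability $P_{\mathrm{nz}}=\tfrac{q}{(q-1)^{2^r-1}(q+2^r-1)}$ that a specified all-nonzero tuple of values occurs, and a short factoring argument gives $P_{\mathrm{z}}=\tfrac{1}{(q-1)^{2^r-2}(q+2^r-1)}$ for the event that one prescribed $f_k$ vanishes while the others take specified nonzero values. Summing $P_{\mathrm{nz}}$ over the $2^r-1-r$ free coordinates at a nonzero point reproduces the factor $\tfrac{q}{(q-1)^r(q+2^r-1)}$ of Proposition \ref{final_gen}, while summing $P_{\mathrm{z}}$ over the freed coordinates at a special point should produce $\tfrac{1}{(q-1)^{r-1}(q+2^r-1)}$, the ratio $P_{\mathrm{z}}/P_{\mathrm{nz}}=\tfrac{q-1}{q}$ accounting exactly for the difference between the two per-point factors. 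Multiplying the $l-m$ nonzero and $m$ special contributions against the leading constant $\tfrac{q^{\sum n_k}L_{2^r-2}}{\zeta_q^{2^r-1}(2)}$ then yields the main term, and the relative error is inherited from Proposition \ref{final_gen}: reducing the degrees by at most $m\le l$ and summing over at most $(q-1)^{l}$ freed patterns keeps it of the shape $O\!\big(q^{\,l-\tfrac12\min(n_1,\dots,n_{2^r-1})}\big)$.

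The main obstacle I expect is precisely this freed-coordinate accounting at the special points. Coprimality (Theorem \ref{family}) is what guarantees a single vanishing $f_{k_0}$ and hence a clean assignment, but the delicate point is to verify that, once $f_{k_0}$ is pulled out, the reduced products indexed by $j\in S_{k_0}$ sweep out $\mathbb{F}_q^\ast$ in such a way that the sum over freed values contributes the correct power of $(q-1)$ \emph{uniformly} in the type of special point, independently of $|S_{k_0}|$; this uniformity is exactly what makes the factor $\tfrac{1}{(q-1)^{r-1}(q+2^r-1)}$ the same for every special point. Keeping the degree reductions coming from $m$ distinct factorings compatible with the error bound of Proposition \ref{final_gen}, and checking that the constant $L_{2^r-2}$ and the zeta factors are unaffected to leading order, are the remaining technical points.
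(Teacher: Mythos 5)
Your proposal follows essentially the same route as the paper's proof: use the correspondence of Theorem \ref{family} and pairwise coprimality to identify, from the zero pattern of the $a_i^j$, the unique $f_k$ that must vanish at each special point, factor out the corresponding linear factor, apply Proposition \ref{final_gen} to the reduced tuple, and account for the per-point correction factor $(q-1)/q$ that converts $\tfrac{q}{(q-1)^r(q+2^r-1)}$ into $\tfrac{1}{(q-1)^{r-1}(q+2^r-1)}$ at each of the $m$ special points. The paper's own proof is just a terser version of this same argument, so your additional bookkeeping and the uniformity check you flag are refinements of, not departures from, its approach.
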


\begin{proof} Acording to the values of $a^{j}_{i}$ we can decide which of the polynomials $f_k$ should satisfy $f_j(x_i)=0$, see Theorem \ref{family}. Write $f_k(x)=(x-x_i)f'_k(x)$ with $f'_k(x_i)\neq 0$ and apply previous Proposition. Hence for each value of $i$ such that $\prod_{i=1}^{r}a_{i}^{j}=0$, we should multiply the number in Proposition \ref{final_gen} by $(q-1)/q$.
\end{proof}

Finally, previous corollary with $l=q$ together with the fact that there are $\frac{q-1}{2}$ squares in $\mathbb{F}_q$, implies the following generalization of Theorem \ref{main}:

\begin{thm}\label{main_gen}
If the degrees $n_{1},\dots,n_{2^{r}-1}$ go to infinity 
$$
\frac{|\{(f_{1},\dots,f_{2^{r}-1})\in\widehat{\mathcal{F}}_{(n_1,\dots,n_{2^r-1})}:\,\widehat{S}(f_{1},\dots,f_{2^r-1})=M\}|}{|\widehat{\mathcal{F}}_{(n_1,\dots,n_{2^r-1})}|}=
\mathrm{Prob}\left(\sum_{j=1}^{q+1}X_{j}=M\right)
$$
where the $X_{j}$ are i.i.d. random variables such that
$$
X_{i}=\left\{\begin{array}{ccc} -1               & \rm{with\,probability}& \frac{(2^{r}-1)(q+2^{r}-2)}{2^{r}(q+2^{r}-1)}\\
\,&\,&\,\\
                                                 2^{r-1}-1  & \rm{with\,probability}& \frac{2(2^{r}-1)}{2^{r}(q+2^{r}-1)}\\
\,&\,&\,\\                                                 
                                                 2^{r}-1        & \rm{with\,probability}& \frac{q}{2^{r}(q+2^{r}-1)}
\end{array}\right..$$

\end{thm}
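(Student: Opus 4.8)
The plan is to mirror the strategy used for Theorem \ref{main}: decompose $\widehat{S}$ into local contributions indexed by the $q+1$ points of $\mathbb{P}^1(\mathbb{F}_q)$ and show that these become independent and identically distributed as the degrees grow. For a tuple $(f_1,\dots,f_{2^r-1})$ and a point $x\in\mathbb{P}^1(\mathbb{F}_q)$ I set $\widehat{S}_x:=\sum_{i=1}^{2^r-1}\chi(p_i(x))$, so that $\widehat{S}=\sum_x\widehat{S}_x$, and first determine the possible values of $\widehat{S}_x$. The $p_i$ run over the $2^r-1$ nontrivial quadratic subextensions, i.e. over the nontrivial characters of $G=(\mathbb{Z}/2\mathbb{Z})^r$; since the $f_j$ are pairwise coprime, at most one factor $f_{j_0}$ vanishes at $x$, so $x$ is either unramified or ramified with inertia $I_x\cong\mathbb{Z}/2\mathbb{Z}$. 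A character $p_i$ that is ramified at $x$ has $\chi(p_i(x))=0$, so only the characters of $H:=G/I_x$ contribute and $\widehat{S}_x=\sum_{\psi\neq 1}\psi(\mathrm{Frob}_x)$, the sum running over the nontrivial characters of $H$. The elementary identity $\sum_{\psi\neq 1}\psi(g)=|H|-1$ if $g=0$ and $=-1$ otherwise then gives $\widehat{S}_x=2^r-1$ (unramified, trivial Frobenius), $\widehat{S}_x=2^{r-1}-1$ (ramified, trivial Frobenius in $G/I_x$), or $\widehat{S}_x=-1$ in all remaining cases, which is exactly the support $\{-1,2^{r-1}-1,2^r-1\}$ of the $X_j$.

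Next I would compute the limiting probability of each value. The idea is that, as $n_1,\dots,n_{2^r-1}\to\infty$, the joint Frobenius data at any fixed finite set of points becomes equidistributed and independent: this is the content of Corollary \ref{corofinal_gen}, which prescribes the local values of the $h_j$ at $l$ points and yields a count factorizing over the points up to the stated error. Summing this count over the square/nonsquare patterns and using that $\mathbb{F}_q$ has $\frac{q-1}{2}$ nonzero squares, one finds that a point is unramified with probability $\frac{q}{q+2^r-1}$ with, conditionally, $\mathrm{Frob}_x$ equidistributed over $G$, and ramified at a given $f_{j_0}$ with probability $\frac{1}{q+2^r-1}$ with, conditionally, residual Frobenius equidistributed over $G/I_x\cong(\mathbb{Z}/2\mathbb{Z})^{r-1}$. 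Hence $\mathrm{Prob}(\widehat{S}_x=2^r-1)=\frac{q}{q+2^r-1}\cdot\frac{1}{2^r}=\frac{q}{2^r(q+2^r-1)}$ and $\mathrm{Prob}(\widehat{S}_x=2^{r-1}-1)=(2^r-1)\cdot\frac{1}{q+2^r-1}\cdot\frac{1}{2^{r-1}}=\frac{2(2^r-1)}{2^r(q+2^r-1)}$, the factor $2^r-1$ accounting for the choice of the ramifying factor $f_{j_0}$, while the probability of $-1$ is the complement. These are exactly the weights in the statement.

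With the local probabilities in hand, I would treat the point at infinity separately, passing from $\mathcal{F}_{(n_1,\dots,n_{2^r-1})}$ to $\widehat{\mathcal{F}}_{(n_1,\dots,n_{2^r-1})}$ through the quadratic-twist symmetry exactly as in Remark \ref{symmetry} and Corollary \ref{superfinal}, so that all $q+1$ points of $\mathbb{P}^1(\mathbb{F}_q)$ play symmetric roles. Finally I would sum over all value-patterns $(\epsilon_x)_{x\in\mathbb{P}^1(\mathbb{F}_q)}\in\{-1,2^{r-1}-1,2^r-1\}^{q+1}$ with $\sum_x\epsilon_x=M$; the factorization of the density reassembles the product of local probabilities into the multinomial weight $\mathrm{Prob}(\sum_{j=1}^{q+1}X_j=M)$, with the accumulated error absorbed as in the proof of Theorem \ref{main}.

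The step I expect to be the main obstacle is the equidistribution of the residual Frobenius in the ramified case. When $f_{j_0}$ vanishes and its binary support has size $s\geq 2$, several of the $h_j$ vanish simultaneously, and $\widehat{S}_x$ is no longer a function of the $h_j(x)$ alone: the unramified characters $\psi_i\psi_{i'}$ with both indices in that support contribute values of the $f_j$ that are invisible to the $h_j(x)$ (this is precisely the role of the auxiliary quantity $c_i=f_1f_2(x_i)$ in Corollary \ref{corprefinal} for $r=2$). One therefore has to prescribe, and equidistribute, all $2^r-1$ subextension characters subject to their compatibility relations rather than only the $r$ values of the $h_j$, and the bookkeeping that turns this finer count into the clean factor $\frac{1}{2^{r-1}}$ for a trivial residual Frobenius is the delicate point of the generalization beyond the formula recorded in Corollary \ref{corofinal_gen}.
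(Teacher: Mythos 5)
Your proposal follows essentially the same route as the paper: the paper deduces Theorem \ref{main_gen} in one line from Corollary \ref{corofinal_gen} with $l=q$, the count of $\frac{q-1}{2}$ squares in $\mathbb{F}_q^{*}$, and the twist symmetry at infinity, which is exactly the local-probability assembly you describe. The obstacle you flag at the end is real and is in fact glossed over by the paper: Corollary \ref{corofinal_gen} prescribes only the $r$ values $h_j(x_i)$, which at a point where the vanishing $f_{j_0}$ has binary support of size at least $2$ do not determine all the $\chi(p_i(x))$, so one must indeed fall back on the finer count of Lemma \ref{guay_gen} (which prescribes the individual $f_j(x_i)$) exactly as you indicate --- your write-up is, if anything, more careful than the paper's.
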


Observe that Theorem \ref{main_gen} specializes to Theorem $1.1$ of \cite{Betal09} when $r=1$ and to Theorem \ref{main} of the present paper when $r=2$.

\newpage

\section*{Appendix}
\begin{center}by Alina Bucur.
\end{center}
\bigskip\noindent {$1.$ \bf Biquadratic covers.}
Fix a finite field $\F_q$ of characteristic different from $2.$ 
A biquadratic cover of $\PP^1$  over $\F_q$ is a covering map $\pi:C \to \PP^1$ such that $\Aut(C/\PP^1) \simeq \Z/2\Z \times \Z/2\Z.$ Such a cover has an affine model (here we take $\PP^1$ to be marked) given by equations 

\[\begin{cases}
y_1^2 =h_1(t) & \\
y_2^2 =  h_2(t) &
\end{cases}\]
with $h_1, h_2 \in \F_q[t]$ both square-free. This corresponds to the field extension $k\left(\sqrt{ h_1}, \sqrt{ h_2}\right)$ of the function field $k = \F_q(t)$ of $\PP_1.$

Secretly, it also has an implied equation \begin{equation}\label{eq:secretr=2}w^2 =  h_1(t)  h_2(t),\end{equation} but of course the right hand side is not necessarily square-free anymore and the gcd of $  h_1$ and $h_2$ will appear squared. If we denote  $f_3 = gcd(  h_1,h_2),$ then we can rewrite our three equations as 

 \[\begin{cases}
y_1^2 =f_1(t)  f_3(t) & \\
y_2^2 = f_2(t) f_3(t) &\\
w^2 = f_1(t) f_2(t)&
\end{cases}\]
with $f_1, f_2, f_3$ all square-free and pairwise coprime.

This corresponds to the field extension $K(\sqrt{f_1f_3}, \sqrt{f_2f_3}, \sqrt{f_1f_2}).$ Note that it is better to think of this as a ``tri-quadratic'' extension, as the roles of $f_1f_3, f_2f_3, f_1f_2$ can be permuted. This also shows that each cover will appear exactly $\#S_3 = 6$ times, which is expected as the automorphism group of the Klein group is indeed $S_3.$ 

Let us look at geometric points over a point $\alpha \in \PP^1$, i.e. we treat our field of definition as algebraically closed. 

First,  looking at \begin{multline*}(f_1,f_2, f_3)\bmod (t-\alpha)^2 =\\ \left(f_1(\alpha) + f_1'(\alpha)(t-\alpha), h_2(\alpha) + f_2'(\alpha)(t-\alpha), f_3(\alpha) + f_3'(\alpha)(t-\alpha) \right) \bmod (t-\alpha)^2\end{multline*} we see that we have:

\begin{itemize}
\item  $(q^2-1)^3$ choices as $f_1,f_2,f_3$ have to be nonzero $\bmod  (t-\alpha)^2.$
\item from those we need to exclude the possibility that $(t-\alpha)$ divides any two of $f_1,f_2,f_3$ (as they have to be coprime), i.e. we cannot allow $f_1(\alpha), f_2(\alpha), f_3(\alpha)$ to contain two or three zeros.  Let us examine each situation we need to avoid.
\begin{itemize}
\item[\bf Two zeros:] The situation is completely symmetric in $f_1,f_2,f_3$ so it is enough to count one possibility and multiply by $3.$ If $f_1(\alpha) = f_2(\alpha) = 0,$ but $f_3(\alpha) \neq 0,$ then we must have $f_1'(\alpha) \neq 0$ (so $q-1$ choices), $f_2'(\alpha) \neq 0$ ($q-1$ choices) and no restrictions on $f_3'(\alpha)$ ($q$ choices). Taking into account the $q-1$ possibilities for $f_3(\alpha)$, we have to subtract $3 (q-1)^3 q.$
\item[\bf Three zeros:] In this case we must have $f_1'(\alpha), f_2'(\alpha), f_3'(\alpha)$ all nonzero, so there are $(q-1)^3$ such triples. 
\end{itemize}
\end{itemize}
Thus we start with \[(q^2-1)^3 - 3q(q-1)^3 -  (q-1)^3 = q^2(q-1)^3(q+3)\] triples modulo  $(t-\alpha)^2.$

Geometrically, there are two possibilities for the fiber over $\alpha.$ 

\begin{enumerate}
\item[{\bf I.}] There will be $4$ distinct points when $f_1(\alpha) f_3(\alpha) \neq 0$ and $f_2(\alpha) f_3(\alpha) \neq 0,$ i.e. when $f_1(\alpha), f_2 (\alpha), f_3(\alpha)$ are all nonzero ($(q-1)$ choices each)  and $f_1' (\alpha), f_2' (\alpha), f_3'(\alpha)$ have no restrictions ($q$ choices each). Thus  there are $q^3(q-1)^3$ possibilities.\\

\item[{\bf II.}] There $2$ distinct points otherwise when either $f_1 (\alpha) f_3(\alpha) =0$ or $f_2(\alpha) f_3(\alpha) =0.$ Since no two of the terms can be zero at the same time, this means that exactly one of $f_1(\alpha), f_2(\alpha), f_3(\alpha)$ is zero. If $f_1(\alpha) =0,$ then $f_1'(\alpha) \neq 0.$ As the situation is again completely symmetric in $f_1, f_2, f_3,$ we have $3q^2(q-1)^3$ such triples. 

\end{enumerate}

To count $\F_q$-rational points, we must split each of the two cases above into two further cases.
\begin{enumerate}
\item[{\bf Ia.}] The fiber has $4$ $\F_q$-rational points when $f_1(\alpha)f_3(\alpha)$ and $f_2(\alpha)f_3(\alpha)$ are both quadratic residues (and non-zero, as we are in the first case above). Thus  $f_1(\alpha), f_2(\alpha), f_3(\alpha)$ have to be all three either quadratic residues or quadratic non residues. In either case they are all nonzero, and thus  $f_1' (\alpha), f_2' (\alpha), f_3'(\alpha)$ have no restrictions imposed on them. Therefore we have 

\[ 2 \left(\frac{q-1}{2} \right)^3 q^3 = \frac{q^3(q-1)^3}{4}\] choices, i.e. probability $1/4$ to get this subcase out of case I.

\item[{\bf Ib.}] There are no rational points (but $2$ points of degree $2$) in the fiber when exactly one of $f_1(\alpha), f_2(\alpha), f_3(\alpha)$ is a  quadratic residue or exactly one of $f_1(\alpha), f_2(\alpha), f_3(\alpha)$ is a  quadratic nonresidue. This situation occurs with probability $3/4$ out of case I. (Note that in this case at least one of the two defining equations has no solution, therefore no rational point.)

\item[{\bf IIa.}] The fiber consists of $2$ $\F_q$-rational points when one of the following three cases occur.
\begin{itemize}
\item $f_1(\alpha)f_3(\alpha) = 0$ and $f_2(\alpha)f_3(\alpha)$ is a nonzero quadratic residue. Thus we need to have $f_1(\alpha)=0,$ $f_1'(\alpha) \neq 0$ and we get \[2 \left( \frac{q-1}{2}\right)^2 (q-1) q^2 = \frac{q^2 (q-1)^3}{2} \textrm{ possibilities.}\] 
\item $f_2(\alpha)f_3(\alpha) = 0$ and $f_1(\alpha)f_3(\alpha)$ is a nonzero quadratic residue. As above,  there are $\frac{q^2 (q-1)^3}{2}$ such triples.
\item $f_3(\alpha) =0$ and $f_1(\alpha) f_2(\alpha)$  is a nonzero quadratic residue.  Similarly, there are $\frac{q^2 (q-1)^3}{2}$ such triples.
\end{itemize}
In conclusion, the probability of getting this subcase out of case II is $1/2.$

\item[{\bf IIb.}] The fiber contains no $\F_q$-rational points (it is one double point of degree $2$) in one of the following three cases.

\begin{itemize}
\item $f_1 (\alpha) f_3(\alpha)= 0$ and $f_2(\alpha)f_3(\alpha)$ is a quadratic nonresidue. There are $\frac{q^2 (q-1)^3}{2}$ such triples.
\item $f_1 (\alpha)f_3(\alpha)$ is a quadratic nonresidue and $f_2(\alpha)h(\alpha) = 0$. As before, there are $\frac{q^2 (q-1)^3}{2}$ such triples.
\item $f_3(\alpha)=0$ and $f_1(\alpha) f_2(\alpha)$ is a quadratic nonresidue. There are also $\frac{q^2 (q-1)^3}{2}$ such triples.
\end{itemize}
In conclusion, the probability of getting this subcase out of case II is also $1/2.$ 
\end{enumerate}

The upshot is the following ``prediction''.

\bigskip\noindent {\bf Conjecture $1$.}\, {\em
\[\prob\left( \#C(\F_q) = M; \textrm{$C$ biquadratic cover of $\PP^1$ defined over $\F_q$}\right) \sim \prob (X_1+ \dots+X_{q+1} = M)\] 

where $X_i$'s are i.i.d. random variables taking values

\[ X_i = \begin{cases} 

4 & \textrm{ with probability } \displaystyle \frac{1}{4} \frac{q^3(q-1)^3}{q^2(q-1)^3(q+3)} = \frac{q}{4(q+3)}\\
&\\
2 &  \textrm{ with probability } \displaystyle \frac{1}{2} \frac{3q^2(q-1)^3}{q^2(q-1)^3(q+3)} = \frac{3}{2(q+3)}\\
&\\
0 &  \textrm{ with probability }  \displaystyle \frac{3q}{4(q+3)}+ \frac{3}{2(q+3)}= \frac{3(q+2)}{4(q+3)}

\end{cases}\]}

Note that the expected number of points in a fiber is $1$ and on the whole curve is $q+1.$ 

\bigskip\noindent {$2.$ \bf The general case: $r$-quadratic covers.}
The argument can be generalized to the case of $r$-quadratic covers of $\PP^1$ over $\F_q$, i.e. covers $\pi:C \to \PP^1$ with \[\Aut (C/\PP^1) \simeq  \underbrace{\Z/2\Z \times \dots \times \Z/2\Z}_{r \textrm{ times}}.\] 

These are precisely the probabilities that appear in Theorem \ref{main_gen} of the present paper. Our argument works for any $r \in \Z_{>0}.$ For $r=2$ we will recover the predictions from Section $1.$ and for $r=1$ we will recover the results from \cite{KR09}.

An affine model of an $r$-quadratic cover is given by equations

\[\begin{cases}
y_1^2 = h_1(t) &\\
y_2^2 = h_2(t)&\\
\vdots &\\
y_r^2 = h_r(t),&
\end{cases}\]
with $ h_1, \dots,  h_r \in \F_q[t]$ square-free polynomials. The cover corresponds to the $r$-quadratic field extension $k\left(\sqrt{ h_1}, \sqrt{ h_2}, \dots, \sqrt{ h_r} \right)$ of $k=\F_q(t).$ 

Together with the ``secret'' equations -- the equivalents of \eqref{eq:secretr=2} -- we get in fact $2^r -1$ equations

\begin{equation}\label{eq:alldef}y_J^2 = \prod_{j \in J} h_j(t)\end{equation}
indexed by the nonempty subsets $J \subseteq \{1, \dots, r\}.$ Again we want to take out gcd's as we did in Section \ref{sec:family}. We obtain $f_1, \dots, f_{2^r-1}$ square-free pairwise coprime polynomials that define this extension. (We can do this by choosing any enumeration of the nonempty subsets $J$. One possibility is the one described in the proof of Theorem 6.3 in the paper.) 

Let us examine the fiber above a point $\alpha \in \PP^1.$ We first consider the geometric points. In order to ease notation, let $m = 2^r-1.$ We need to look at 

\[(f_1, \dots, f_m) \equiv \left(f_1(\alpha) + f_1'(\alpha)(t-\alpha), \dots, f_m(\alpha) + f_m'(\alpha) (t-\alpha) \right) \bmod (t-\alpha)^2.\]

First, since \[(f_1, \dots, f_m) \not \equiv (0,\dots,0) \bmod (t-\alpha)^2,\] we start with at most $(q^2-1)^m$ choices. From these, we need to exclude those that would allow two or more of the $f_i$'s to be divisible by $(t-\alpha),$ since they have to be pairwise coprime. 

Fix an integer $k$ with $1 \leq k \leq m.$ Note that if have exactly $k$ zeros among $f_1(\alpha), \dots, f_m(\alpha),$ then the corresponding derivatives of the $k$ polynomials that have a zero at $\alpha$ must be nonzero. Thus, for each choice of $k$ numbers in $\{1, \dots, m\}$ we have \[(q-1)^k (q-1)^{m-k} q^{m-k} =q^{m-k} (q-1)^m \textrm{ choices}.\] Therefore we start with 

\[(q^2-1)^m - \sum_{k=2}^m \binom{m}{k}  q^{m-k} (q-1)^m 
=q^{m-1}(q-1)^m (q+m) \textrm{ tuples modulo }(t-\alpha)^2.\] 

Geometrically, we have the following possibilities. 

\begin{enumerate}
\item[\bf I.] There will be $2^r$ distinct points when $f_1(\alpha), \dots,  f_m(\alpha)$ are all nonzero. Again in this case we have no restrictions on the derivatives. Thus we have $(q-1)^{m}q^m$ possibilities.\\

\item[\bf II.] Otherwise, only one of the $f_1(\alpha), \dots,  f_m(\alpha)$ can be zero (as the $f_j$s are pairwise coprime). In this case, the fiber will contain $2^{r-1}$ geometric points. The situation is completely symmetric in $f_1, \dots, f_m.$ Thus, it is enough to count the case when $f_1(\alpha) =0$ and multiply the result by $m.$  Then $f_1'(\alpha) \neq 0$ so the derivative can take $(q-1)$ values. For the other $m-1$ terms, we have $f_j(\alpha) \neq 0$, so it can take $(q-1)$ values, and there are no restrictions on the derivatives $f'_j(\alpha), 2 \leq j \leq m.$  This means that there are $m (q-1) (q-1)^{m-1}q^m$ choices in total that lead to this case. Note that this is equal to \[mq^{m-1}(q-1)^m= q^{m-1}(q-1)^m (q+m) - q^{m}(q-1)^{m}.\]

\end{enumerate}

We now look at the $\F_q$-rational points in the fiber above $\alpha.$ The two cases above split into two cases each.

\begin{enumerate}
\item[{\bf Ia.}] The fiber has $2^r$ $\F_q$-rational points which occurs with probability $1/2^r$ out of case I.

\item[{\bf Ib.}] The fiber has no $\F_q$-rational points which occurs with probability $(2^r-1)/2^r$ out of case I.

\item[{\bf IIa.}] The fiber consists of $2^{r-1}$ $\F_q$-rational points which occurs with probability $1/{2^{r-1}}$ out of case II.

\item[{\bf IIb.}] The fiber contains no $\F_q$-rational points which occurs with probability $(2^{r-1}-1)/{2^{r-1}}$ out of case II.
\end{enumerate}

Note that this already tells us that the expected number of points in a fiber is $1$ and on the whole curve is $q+1.$ Another interesting observation is that the case of biquadratic extensions is a bit different than the general case. For instance, for $r=2$ case II splits into two subcases with probability 50-50; but in general the probabilities for the subcases IIa and IIb are $2^{1-r}$ and $(1 - 2^{1-r}).$ Which means that one has to look at the case $r\geq 3$ in order to get the complete picture.

Since $m = 2^r-1$ we get the following prediction. 

\bigskip\noindent {\bf Conjecture $2$.}\, {\em

\[\prob\left( \#C(\F_q) = M; \textrm{$C$ $r$-quadratic cover of $\PP^1$ defined over $\F_q$}\right) \sim \prob (X_1+ \dots+X_{q+1} = M)\] 

where $X_i$'s are i.i.d. random variables taking values

\[ X_i = \begin{cases} 

2^r & \textrm{ with probability } \displaystyle \frac{1}{2^r} \cdot \frac{q^m(q-1)^m}{q^{m-1}(q-1)^m(q+m)} = \frac{q}{2^r(q+m)}= \frac{q}{2^r(q+2^r -1)}\\
&\\
2^{r-1} &  \textrm{ with probability } \displaystyle \frac{1}{2^{r-1}} \cdot \frac{mq^{m-1}(q-1)^m}{q^{m-1}(q-1)^m(q+m)} = \frac{m}{2^{r-1}(q+m)} = \frac{2^r-1}{2^{r-1}(q+2^r -1)}\\
&\\
0 &  \textrm{ with probability }  \displaystyle \frac{(2^r-1)(q+2^r-2)}{2^r(q+2^r-1)}.

\end{cases}\]}

These are exactly the probabilities that appear in Theorem \ref{main_gen} of the present paper. 

The argument works for any $r \in \Z_{>0}.$ For $r=2$ we recover the predictions from previous Section; for $r=1$ we recover the random variables from \cite{KR09}.


\end{document}